\theoremstyle{definition}
\newtheorem*{theoA}{Theorem A}
\newtheorem*{theoB}{Theorem B}
\newtheorem*{theoC}{Theorem C}
\newtheorem*{theoD}{Theorem D}
\newtheorem*{theoE}{Theorem E}
\newtheorem*{theoF}{Theorem F}
\newtheorem*{theoG}{Theorem G}
\newtheorem{theo}{Theorem}[section]
\newtheorem{lem}{Lemma}[section]
\newtheorem{cor}{Corollary}[section]
\newtheorem{defi}{Definition}[section]
\newtheorem{rem}{Remark}[section]
\newtheorem{question}{Question}[section]
\newcommand{\ol}{\overline}
\newcommand{\be}{\begin{equation}}
\newcommand{\ee}{\end{equation}}
\newcommand{\beas}{\begin{eqnarray*}}
\newcommand{\eeas}{\end{eqnarray*}}
\newcommand{\bea}{\begin{eqnarray}}
\newcommand{\eea}{\end{eqnarray}}
\numberwithin{equation}{section}
\begin{document}
\title[value distribution of some differential monomials]{value distribution of some differential monomials}
\date{}
\author[B. Chakraborty, et al.]{Bikash Chakraborty$^{1}$, Sudip Saha$^{2}$, Amit Kumar Pal$^{3}$ and Jayanta Kamila$^{4}$}
\date{}
\address{$^{1}$Department of Mathematics, Ramakrishna Mission Vivekananda Centenary College, Rahara,
West Bengal 700 118, India.}
\email{bikashchakraborty.math@yahoo.com, bikash@rkmvccrahara.org}
\address{$^{2}$Department of Mathematics, Ramakrishna Mission Vivekananda Centenary College, Rahara,
West Bengal 700 118, India.}
\email{sudipsaha814@gmail.com}
\address{$^{3}$Department of Mathematics, University of Kalyani, Kalyani, West Bengal 741 235, India.}
\email{mail4amitpal@gmail.com}
\address{$^{4}$Department of Mathematics, Ramakrishna Mission Vivekananda Centenary College, Rahara,
West Bengal 700 118, India.}
\email{kamilajayanta@gmail.com}
\maketitle
\let\thefootnote\relax
\footnotetext{2010 Mathematics Subject Classification: 30D30, 30D20, 30D35.}
\footnotetext{Key words and phrases: Value distribution, Transcendental Meromorphic function, Differential Monomials}
\begin{abstract} Let $f$ be a transcendental meromorphic function defined in the complex plane $\mathbb{C}$. We consider the value distribution of the differential polynomial $f^{q_{0}}(f^{(k)})^{q_{k}}$, where $q_{0}(\geq 2), q_{k}(\geq 1)$ are $k(\geq1)$ non-negative integers.  We obtain a quantitative estimation of the characteristic function $T(r, f)$ in terms of $\ol{N}\left(r,\frac{1}{f^{q_{_{0}}}(f^{(k)})^{q_{k}}-1}\right)$.\par
Our result generalizes the results obtained by Xu et al. (Math. Inequal. Appl., 14, 93-100, 2011) and Karmakar and Sahoo (Results Math., 73, 2018) for a particular class of transcendental meromorphic functions.
\end{abstract}
\section{Introduction}
Throughout this paper, we asume that the readers are familliar with the standard notations of Nevanlinna theory (\cite{8}). Also, we assume that $f$ is a transcendental meromorphic function defined in the complex plane $\mathbb{C}$. It will be convenient to let that $E$ denote any set of positive real numbers of finite linear (Lebesgue) measure, not necessarily same at each occurrence. For any non-constant meromorphic function $f$, we denote by $S(r,f)$ any quantity satisfying $$S(r, f) = o(T(r, f))~~\text{as}~~r\to\infty,~r\not\in E.$$
\begin{defi}
Let $f$ be a non-constant meromorphic function. A meromorphic function $a(z)(\not\equiv 0,\infty)$ is called a \enquote{small function} with respect to $f$ if $T(r,a(z))=S(r,f)$.
\end{defi}
\begin{defi} Let $f$ be non-constant meromorphic function defined in the complex plane $\mathbb{C}$, and $k$ be a positive integer. We say $$M[f]=(f)^{q_{0}}(f')^{q_{1}}\ldots(f^{(k)})^{q_{k}}$$ is a  differential monomial generated by $f$,  where $q_{0},q_{1},\ldots,q_{k}$ are non-negative integers.\par
In this context, the terms  $\mu:=q_{0}+q_{1}+\ldots+q_{k}$ and $\mu_{*}:=q_{1}+2q_{2}+\ldots+kq_{k}$ are known as the degree and weight of the differential monomial respectively.
\end{defi}
\begin{defi}(\cite{f})
Let $a\in \mathbb{C}\cup\{\infty\}$.  For a positive integer  $k$, we denote
\begin{enumerate}
\item [i)] by $N_{k)}\left(r,a;f\right)$ the counting function of $a$-points of $f$ with multiplicity $\leq k$,
\item [ii)] by $N_{(k}\left(r,a;f\right)$ the counting function of $a$-points of $f$ with multiplicity $\geq k$,
\end{enumerate}
Similarly, the reduced counting functions $\ol{N}_{k)}(r,a;f)$ and $\ol{N}_{(k}(r,a;f)$ are defined.
\end{defi}
\begin{defi}(\cite{ld})
For a positive integer $k$, we denote $N_{k}(r,0;f)$ the counting function of zeros of $f$, where a zero of $f$ with multiplicity $q$ is counted $q$ times if $q\leq k$, and is counted $k$ times if  $q> k$.
\end{defi}
In 1959, Hayman proved the following theorem:
\begin{theoA}(\cite{hn})
  If $f$ is a transcendental meromorphic function and $n\geq 3$, then $f^{n}f'$ assumes all finite values except possibly zero infinitely often.
\end{theoA}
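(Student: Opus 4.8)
The plan is to prove the sharper quantitative statement that for every finite $a\neq 0$ one has
\[
(n-2)\,T(r,f)\le \overline N\!\left(r,\frac{1}{f^{n}f'-a}\right)+S(r,f).
\]
Since $n\ge 3$ makes the coefficient $n-2$ positive, this forces $\overline N(r,1/(f^{n}f'-a))$ to grow at least like a fixed multiple of $T(r,f)$; as $f$ is transcendental, $T(r,f)\neq S(r,f)$, so the counting function cannot be bounded and $f^{n}f'-a$ must have infinitely many zeros. Throughout write $\psi:=f^{n}f'$, which is non-constant because $f$ is transcendental, and note that the upper bound $T(r,\psi)=O(T(r,f))$ obtained below legitimizes writing $S(r,\psi)=S(r,f)$.

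First I would estimate the proximity function $m(r,1/f)$. The identity $\dfrac{1}{f^{n+1}}=\dfrac{f'/f}{\psi}$ together with the lemma on the logarithmic derivative, $m(r,f'/f)=S(r,f)$, gives $(n+1)\,m(r,1/f)\le m(r,1/\psi)+S(r,f)$. Rewriting $m(r,1/\psi)=T(r,\psi)-N(r,1/\psi)+O(1)$ and $m(r,1/f)=T(r,f)-N(r,0;f)+O(1)$ by the First Main Theorem, and computing $N(r,1/\psi)$ from the zero structure of $\psi$ — a zero of $f$ of order $m$ is a zero of $\psi$ of order $(n+1)m-1$, while the remaining zeros of $\psi$ are the zeros $N_{0}(r,0;f')$ of $f'$ lying off the zeros of $f$, so that $N(r,1/\psi)=(n+1)N(r,0;f)-\overline N(r,0;f)+N_{0}(r,0;f')$ — one finds, after the large term $(n+1)N(r,0;f)$ cancels,
\[
(n+1)\,T(r,f)\le T(r,\psi)+\overline N(r,0;f)-N_{0}(r,0;f')+S(r,f).
\]

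Next I would bound $T(r,\psi)$ from above by the Second Main Theorem applied to $\psi$ at the three values $0,a,\infty$, giving $T(r,\psi)\le \overline N(r,0;\psi)+\overline N(r,a;\psi)+\overline N(r,\infty;\psi)+S(r,\psi)$. Here $\overline N(r,\infty;\psi)=\overline N(r,\infty;f)$, since the poles of $\psi$ are exactly those of $f$, and $\overline N(r,0;\psi)\le \overline N(r,0;f)+\overline N_{0}(r,0;f')$. Substituting into the previous inequality, the contributions of the auxiliary zeros of $f'$ disappear because $\overline N_{0}(r,0;f')-N_{0}(r,0;f')\le 0$, leaving
\[
(n+1)\,T(r,f)\le 2\overline N(r,0;f)+\overline N(r,\infty;f)+\overline N(r,a;\psi)+S(r,f).
\]
Bounding $\overline N(r,0;f)\le T(r,f)+O(1)$ and $\overline N(r,\infty;f)\le T(r,f)+O(1)$ then yields the key inequality displayed at the start, and hence the theorem.

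The main obstacle is organizational rather than a single hard estimate: one must track the high multiplicities that $\psi$ acquires at the zeros and poles of $f$ so that the genuinely large counting terms cancel, and must absorb the extra zeros of $f'$ via the inequality $\overline N_{0}\le N_{0}$. The hypothesis $n\ge 3$ is forced only at the final step: replacing the detailed counting functions $\overline N(r,0;f)$ and $\overline N(r,\infty;f)$ by $T(r,f)$ costs a total coefficient $2+1=3$ on the right, so a positive multiple of $T(r,f)$ survives on the left precisely when $n+1>3$. For $n=1,2$ this crude bound is too lossy and the argument collapses, consistent with $n\ge 3$ being genuinely necessary for meromorphic $f$: it is the pole contribution $\overline N(r,\infty;f)$, absent in the entire case, that pushes the threshold up to $3$.
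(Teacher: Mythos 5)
Your proof is correct, but there is nothing in the paper to compare it against: Theorem A is stated with a citation to Hayman (1959) and is never proved in this paper, so your argument stands on its own. What you give is essentially the classical Hayman-type argument, and the bookkeeping checks out: $(n+1)m(r,1/f)\le m(r,1/\psi)+S(r,f)$ from $1/f^{n+1}=(f'/f)\cdot(1/\psi)$; the exact zero count $N(r,1/\psi)=(n+1)N(r,0;f)-\overline{N}(r,0;f)+N_{0}(r,0;f')$ (valid because $k=1$, so a zero of $f$ of order $m$ really is a zero of $f'$ of order exactly $m-1$); cancellation of the large term $(n+1)N(r,0;f)$ against the First Main Theorem; the Second Main Theorem for $\psi$ at $0,a,\infty$; and absorption of the stray term via $\overline{N}_{0}(r,0;f')-N_{0}(r,0;f')\le 0$. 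The resulting inequality $(n-2)T(r,f)\le\overline{N}\left(r,a;\psi\right)+S(r,f)$ is, for $k=1$ and $n\ge 3$, actually sharper than the coefficient $6/(2n-3)$ appearing in Theorem G and in the paper's main theorem; the content of those later results lies precisely in the cases your argument cannot reach, namely $n=2$, and $f^{(k)}$ with $k\ge 2$, where zeros of $f$ of order $\le k$ no longer force high-order zeros of $\psi$ and the crucial cancellation degrades.

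Three small repairs are worth making. (i) At the end, ``the counting function cannot be bounded'' should read ``cannot be $O(\log r)$'': finitely many zeros of $f^{n}f'-a$ give $\overline{N}(r,a;\psi)=O(\log r)$, and the contradiction uses $\lim_{r\to\infty}T(r,f)/\log r=\infty$ for transcendental $f$ (the paper's Lemma 3.3), together with the fact that the exceptional set attached to $S(r,f)$ has finite measure, so one may still let $r\to\infty$ outside it. (ii) The non-constancy of $\psi$, needed to apply the Second Main Theorem, deserves a line (compare the paper's Lemma 3.4), though for $k=1$ it is immediate: $f^{n}f'=(f^{n+1})'/(n+1)$, and if this were constant then $f^{n+1}$ would be a polynomial of degree at most one, impossible for a nonconstant meromorphic $f$. (iii) Your closing claim that $n\ge 3$ is ``genuinely necessary'' contradicts the paper's own introduction: the conclusion of Theorem A holds also for $n=2$ (Mues) and $n=1$ (Chen and Fang); $n\ge 3$ is necessary only for this particular crude-bounding argument, not for the theorem.
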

Moreover, Hayman (\cite{hn}) conjectured that the Theorem A remains valid for the cases $n = 1,~ 2$. In 1979, Mues (\cite{m}) confirmed the Hayman's Conjecture for $n=2$ and Chen and Fang (\cite{chen}) ensured the conjecture for $n=1$ in 1995.\par
In 1992, Q. Zhang (\cite{qz}) gave the quantitative version of Mues's result  as follows:
\begin{theoB}(\cite{qz}) For a transcendental meromorphic function $f$, the following inequality holds :
$$T(r,f)\leq 6N\bigg(r,\frac{1}{f^{2}f'-1}\bigg)+S(r,f).$$
\end{theoB}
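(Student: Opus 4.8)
The plan is to study the auxiliary function $\psi:=f^{2}f'=\tfrac13\left(f^{3}\right)'$ and to convert information about its value distribution into a bound on $T(r,f)$. The first ingredient is the elementary identity $\dfrac{1}{f^{3}}=\dfrac{1}{\psi}\cdot\dfrac{f'}{f}$, which follows at once from $\psi/f^{3}=f'/f$. Applying the proximity function and the lemma on logarithmic derivatives to it gives the key estimate $3\,m\!\left(r,\tfrac1f\right)\le m\!\left(r,\tfrac1\psi\right)+S(r,f)$. The second ingredient is the local multiplicity bookkeeping for $\psi$: a pole of $f$ of order $p$ is a pole of $\psi$ of order $3p+1$, a zero of $f$ of order $m$ is a zero of $\psi$ of order $3m-1$, and the remaining zeros of $\psi$ are exactly the zeros of $f'$ that are not zeros of $f$. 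These two facts are what make the degree $3$ of $f^{3}$ interact favourably with the value $1$.

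First I would apply the Second Main Theorem to $\psi$ for the three values $0,1,\infty$, combined with the First Main Theorem and the proximity estimate above. Because the zeros of $f$ are zeros of $\psi$ of order $3m-1\ge2$, the excess multiplicity $N\!\left(r,\tfrac1\psi\right)-\ol N\!\left(r,\tfrac1\psi\right)$ produces the term $-3N\!\left(r,\tfrac1f\right)+2\ol N\!\left(r,\tfrac1f\right)$, which cancels the large contribution of the zeros of $f$. Carrying this out leads to a core inequality of the shape
\[
T(r,f)\le \tfrac13\,\ol N(r,f)+\tfrac23\,\ol N\!\left(r,\tfrac1f\right)+\tfrac13\,\ol N\!\left(r,\tfrac{1}{\psi-1}\right)+S(r,f),
\]
in which only the reduced counting functions of the poles and zeros of $f$ remain as parasitic terms besides the desired $1$-point count. (Equivalently, one may feed $F=f^{3}$ into Milloux's inequality, using $F'=3\psi$ and the zero of $F''=3\psi'$ of order $3m-2$ at each zero of $f$ to achieve the same cancellation.)

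The heart of the argument is then to absorb $\ol N(r,f)$ and $\ol N\!\left(r,\tfrac1f\right)$ into the $1$-point count. For this I would introduce the logarithmic-derivative functions $h:=\dfrac{\psi'}{\psi-1}$ and $H:=\dfrac{\psi'}{\psi(\psi-1)}$, both of which satisfy $m(r,\cdot)=S(r,f)$ and have only simple poles — those of $h$ lying at the $1$-points and poles of $\psi$, and those of $H$ at the zeros and $1$-points of $\psi$. Since $h$ has a zero of order $3m-2$ at every zero of $f$ and $H$ has a zero of order $3p$ at every pole of $f$, comparing $N(r,1/h)$ and $N(r,1/H)$ with $T(r,h)$ and $T(r,H)$ yields
\[
3N\!\left(r,\tfrac1f\right)-2\ol N\!\left(r,\tfrac1f\right)\le \ol N\!\left(r,\tfrac{1}{\psi-1}\right)+\ol N(r,f)+S(r,f),
\]
\[
3N(r,f)\le \ol N\!\left(r,\tfrac1f\right)+\ol N\!\left(r,\tfrac{1}{\psi-1}\right)+\ol N_{0}\!\left(r,\tfrac{1}{f'}\right)+S(r,f),
\]
where $\ol N_{0}$ counts the zeros of $f'$ off the zeros of $f$. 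Substituting these back into the core inequality and solving the resulting linear system for $T(r,f)$ produces a bound of the form $T(r,f)\le c\,N\!\left(r,\tfrac{1}{\psi-1}\right)+S(r,f)$; tracking the constants through this elimination is exactly where the factor $6$ is produced.

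I expect the main obstacle to be the term $\ol N_{0}\!\left(r,\tfrac{1}{f'}\right)$ arising from the zeros of $f'$ that are not zeros of $f$: these are genuine zeros of $\psi$ not forced by the zero set of $f$, and they cannot be read off from the identity $\psi/f^{3}=f'/f$. Controlling them — either by a dedicated application of the Second Main Theorem to $f$, or by building them into the auxiliary functions so that they cancel — is the step that governs the size of the admissible constant, and making the bookkeeping sharp enough to reach exactly $6$ rather than a larger number is the delicate part. Everything else is a routine combination of the First and Second Main Theorems with the logarithmic-derivative lemma.
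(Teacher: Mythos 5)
Your preparatory work is essentially all correct: the identity $1/f^{3}=(1/\psi)(f'/f)$, the multiplicity bookkeeping for $\psi=f^{2}f'$, the core inequality $3T(r,f)\le \overline{N}(r,f)+2\overline{N}(r,1/f)+\overline{N}\bigl(r,1/(\psi-1)\bigr)+S(r,f)$, and the two estimates obtained from $h=\psi'/(\psi-1)$ and $H=\psi'/\bigl(\psi(\psi-1)\bigr)$ can each be verified exactly as you state them. The gap is the concluding step: the linear system you propose to ``solve'' does not imply a bound $T(r,f)\le c\,N\bigl(r,1/(\psi-1)\bigr)+S(r,f)$ for \emph{any} constant $c$, so this is not a matter of tracking constants to reach $6$. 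Write $T=T(r,f)$, $\overline{N}_{\infty}=\overline{N}(r,f)$, $\overline{N}_{0}=\overline{N}(r,1/f)$, $\overline{N}_{1}=\overline{N}\bigl(r,1/(\psi-1)\bigr)$, and let $N_{*}$ count the zeros of $f'$ that are not zeros of $f$. Your inequalities are (i) $3T\le\overline{N}_{\infty}+2\overline{N}_{0}+\overline{N}_{1}+S$, (ii) $3N(r,1/f)-2\overline{N}_{0}\le\overline{N}_{1}+\overline{N}_{\infty}+S$, (iii) $3N(r,f)\le\overline{N}_{0}+\overline{N}_{1}+N_{*}+S$; and the only general control on the parasitic term is $N_{*}=N(r,f/f')\le T(r,f'/f)+O(1)\le\overline{N}_{\infty}+\overline{N}_{0}+S$, while the second main theorem applied to $f$ gives only $N(r,1/f')\le 2T-N(r,f)+\overline{N}(r,f)+S$, which is weaker. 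Now consider the assignment $N(r,1/f)=\overline{N}_{0}=T$, $N(r,f)=\overline{N}_{\infty}=T$, $N_{*}=2T$, $\overline{N}_{1}=0$ (all zeros and poles simple): it satisfies (i), (ii), (iii), the logarithmic-derivative bound on $N_{*}$, and the first-main-theorem constraints, each essentially with equality, with $T$ arbitrarily large. Hence no elimination with any multipliers can extract the theorem from these inequalities; the term $N_{*}$ that you flag at the end is not a delicate bookkeeping nuisance but a genuine hole, and auxiliary functions that are \emph{linear} in logarithmic derivatives, like your $h$ and $H$, cannot close it.

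What the actual proofs use instead --- Zhang's argument, Huang--Gu's proof for $f^{2}f^{(k)}$ in \cite{hg}, and the present paper (which, note, does not reprove Theorem B: it quotes it from \cite{qz}, and its own Theorem 2.1 is proved only under a no-simple-pole hypothesis) --- is a \emph{nonlinear} second-order auxiliary function, here
\[
F=2\Bigl(\frac{g'}{g}\Bigr)^{2}+3\Bigl(\frac{g'}{g}\Bigr)'-2\Bigl(\frac{h'}{h}\Bigr)'+\Bigl(\frac{h'}{h}\Bigr)^{2}-4\,\frac{g'h'}{gh},\qquad g=\psi-1,\quad h=\frac{g'}{f},
\]
whose coefficients $(2,3,-2,1,-4)$ are tuned so that, after multiplication by $\beta^{2}$ with $\beta=-g'/(gf)$, the product has poles only at the zeros of $g$ (this is exactly where the paper needs $f$ to have no simple poles), with multiplicity at most $4$, while every multiple zero of $f$ of order $q$ is a zero of $\beta^{2}F$ of order at least $2q-2$. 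One must first prove $F\not\equiv 0$ --- the long Laurent-expansion lemma of Section 3, which is the real work --- and then $T(r,\beta^{2}F)\le 4\overline{N}(r,1/g)+S(r,f)$ converts the parasitic multiplicity terms into counting functions of the $1$-points alone, after which the Lahiri--Dewan inequality (Lemma 3.6 of the paper; your core inequality (i) is precisely its specialization to $n=2$, $p=1$, $k=1$, $a=1$) finishes the argument. This nonlinear device together with its non-vanishing lemma is the missing ingredient in your plan; your second suggested remedy (``building them into the auxiliary functions so that they cancel'') points in the right direction, but it is exactly the hard part of the known proofs, not a routine supplement to the first and second main theorems.
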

In (\cite{xuyi}), Theorem B was improved by Xu and Yi as
\begin{theoC}(\cite{xuyi}) Let $f$ be a transcendental meromorphic function and $\phi(z)(\not\equiv 0)$ be a small function, then
$$T(r,f)\leq 6\overline{N}\bigg(r,\frac{1}{\phi f^{2}f^{'}-1}\bigg)+S(r,f).$$
\end{theoC}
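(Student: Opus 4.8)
The plan is to study the auxiliary function $F=\phi f^{2}f'$ and to show that its $1$-points must be numerous enough to dominate $T(r,f)$. Since $\phi\not\equiv 0$ is a small function, $T(r,\phi)=S(r,f)$, so $S(r,F)=S(r,f)$, and one checks that every proximity and counting estimate for $\phi f^{2}f'$ differs from the corresponding estimate for $f^{2}f'$ only by $S(r,f)$; in particular $\overline{N}(r,\infty;F)=\overline{N}(r,\infty;f)+S(r,f)$, the zeros and poles of $\phi$ being absorbed into the error term. The first step is to record a sharp reduction of $F$ back to $f$. From $F=\phi f^{2}f'$ we get $\frac{1}{f^{3}}=\frac{\phi}{F}\cdot\frac{f'}{f}$, whence the lemma on the logarithmic derivative gives $3\,m(r,1/f)\le m(r,1/F)+S(r,f)$. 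Combining this with the first fundamental theorem applied to $f$ and to $F$, and using the exact multiplicity of a zero of $F$ at a zero of $f$ (a zero of $f$ of order $m$ is a zero of $F$ of order $3m-1$), I expect to obtain the clean relation
$$3\,T(r,f)\le T(r,F)+\overline{N}(r,0;f)-N_{0}(r,0;f')+S(r,f),$$
where $N_{0}(r,0;f')$ counts the zeros of $f'$ that are not zeros of $f$.

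Next I would apply the second fundamental theorem to $F$ at the three values $0,1,\infty$, retaining the ramification term:
$$T(r,F)\le\overline{N}(r,0;F)+\overline{N}(r,1;F)+\overline{N}(r,\infty;F)-N_{1}(r)+S(r,f),$$
with $N_{1}(r)=N(r,0;F')+2N(r,\infty;F)-N(r,\infty;F')\ (\ge 0)$. Here the reduced counting functions split as $\overline{N}(r,0;F)=\overline{N}(r,0;f)+\overline{N}_{0}(r,0;f')+S(r,f)$ and $\overline{N}(r,\infty;F)=\overline{N}(r,\infty;f)+S(r,f)$. The core of the argument is then a careful evaluation of the multiplicities concealed in $N_{1}(r)$: at a zero of $f$ of order $m$ the function $F$ vanishes to order $3m-1$ and $F'$ to order $3m-2$, while at a pole of $f$ of order $p$ the combination $2N(r,\infty;F)-N(r,\infty;F')$ contributes a definite positive amount. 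Feeding these exact orders back into the two displayed inequalities, I expect the surplus copies of $\overline{N}(r,0;f)$ and $\overline{N}(r,\infty;f)$ to be cancelled by $-N_{1}(r)$, the remaining coefficients collapsing to the stated bound $T(r,f)\le 6\,\overline{N}(r,1;F)+S(r,f)$.

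The main obstacle is precisely this multiplicity bookkeeping. A naive combination of the two inequalities (discarding $N_1$) yields only
$$3\,T(r,f)\le 2\overline{N}(r,0;f)+\overline{N}(r,\infty;f)+\overline{N}(r,1;F)+S(r,f),$$
which is vacuous, since $\overline{N}(r,0;f)$ and $\overline{N}(r,\infty;f)$ may each be comparable to $T(r,f)$. Extracting the constant $6$ therefore hinges on estimating the negative term $-N_{1}(r)$ from below with the exact vanishing and pole orders of $F$ and $F'$, so that it absorbs the excess $\overline{N}(r,0;f)$ and $\overline{N}(r,\infty;f)$ instead of being thrown away; equivalently, one must quantify how strongly $F=\phi f^{2}f'$ is ramified over $0$ and $\infty$. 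A secondary technical point is the treatment of the zeros of $f'$ that are not zeros of $f$: these enter with a favourable sign as $-N_{0}(r,0;f')$ in the reduction step and as $\overline{N}_{0}(r,0;f')$ in the second fundamental theorem, and one must verify that the former always dominates the latter so that such points never contribute positively. Finally I would confirm that the introduction of $\phi$ alters none of the dominant terms, every contribution of $\phi$ being controlled by $T(r,\phi)=S(r,f)$.
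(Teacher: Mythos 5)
Your plan has a fatal gap at exactly the point you flag as ``the main obstacle'': the surplus you hope to extract from the ramification term $N_{1}(r)$ does not exist. Track every multiplicity exactly for $F=\phi f^{2}f'$ (up to $S(r,f)$, since $\phi$ is small): at a zero of $f$ of order $m$, $F$ and $F'$ vanish to orders $3m-1$ and $3m-2$; at a zero of $f'$ of order $n$ not at a zero of $f$, to orders $n$ and $n-1$; at a pole of $f$ of order $p$, $F$ and $F'$ have poles of orders $3p+1$ and $3p+2$; at a $1$-point of $F$ of order $s$, $F'$ vanishes to order $s-1$. Now compare, point by point, the ramification contributions with the gap between full and reduced counting functions. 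At a pole of $f$: the ramification term contributes $2(3p+1)-(3p+2)=3p$, while $N(r,\infty;F)-\overline{N}(r,\infty;F)$ contributes $(3p+1)-1=3p$. At a zero of $f$: the contribution $3m-2$ to $N(r,0;F')$ equals $(3m-1)-1=N(r,0;F)-\overline{N}(r,0;F)$ at that point. The same exact match occurs at zeros of $f'$ and at multiple $1$-points. In other words, the ``definite positive amounts'' you plan to harvest from $-N_{1}(r)$ are precisely what converts $N$ into $\overline{N}$ in the second fundamental theorem, with nothing left over: keeping the ramification term and computing all orders exactly returns you to
\begin{equation*}
T(r,F)\le \overline{N}(r,0;F)+\overline{N}(r,1;F)+\overline{N}(r,\infty;F)+S(r,f),
\end{equation*}
i.e.\ to the reduced second fundamental theorem and no more. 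Combined with your (correct) reduction step $3T(r,f)\le T(r,F)+\overline{N}(r,0;f)-N_{0}(r,0;f')+S(r,f)$, this lands inevitably on the inequality you yourself call vacuous,
\begin{equation*}
3T(r,f)\le 2\overline{N}(r,0;f)+\overline{N}(r,\infty;f)+\overline{N}(r,1;F)+S(r,f),
\end{equation*}
and no bookkeeping within this framework can produce the constant $6$.

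A second point of comparison: the paper does not prove Theorem C at all; it is quoted from Xu and Yi. But the mechanism behind this entire family of results --- and the one this paper uses for its own main theorem --- is structurally different from yours. One sets $g:=\phi f^{2}f'-1$ (in the paper, $g=M[f]-1$ and $h=M'[f]/f^{q_{0}-1}$) and builds a Mues-type auxiliary function, e.g.
\begin{equation*}
F^{*}=2\left(\frac{g'}{g}\right)^{2}+3\left(\frac{g'}{g}\right)'-2\left(\frac{h'}{h}\right)'+\left(\frac{h'}{h}\right)^{2}-4\,\frac{g'h'}{g\,h},
\end{equation*}
whose coefficients are engineered so that the Laurent expansions at zeros and poles of $f$ cancel identically: after multiplying by a suitable factor, its only poles lie at the zeros of $g$, each of bounded order, so $N(r,F^{*})\le c\,\overline{N}\left(r,\frac{1}{g}\right)$, while $m(r,F^{*})=S(r,f)$ by the lemma on the logarithmic derivative. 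Since zeros (and, in the paper's setting, multiple poles) of $f$ are zeros of this auxiliary function with controlled multiplicity, its characteristic --- hence $\overline{N}\left(r,\frac{1}{g}\right)$ --- dominates the relevant counting functions of $f$; the paper then closes the argument with the Lahiri--Dewan inequality. That cancellation-by-design, rather than the ramification term of the second fundamental theorem, is the idea your proposal is missing, and it is where the constant $6$ actually comes from.
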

Also, Huang and Gu (\cite{hg}) extended Theorem B by replacing $f'$ by $f^{(k)}$, where $k(\geq1)$ is an integer.
\begin{theoD}(\cite{hg}) Let $f$ be a transcendental meromorphic function and $k$ be a positive integer. Then
$$T(r,f)\leq 6N\bigg(r,\frac{1}{f^{2}f^{(k)}-1}\bigg)+S(r,f).$$
\end{theoD}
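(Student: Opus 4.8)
The plan is to set $F=f^{2}f^{(k)}$ and to reduce the problem, via the first fundamental theorem in the form $T(r,f)=m(r,1/f)+N(r,0;f)+O(1)$, to estimating the proximity function $m(r,1/f)$ and the zero-counting function $N(r,0;f)$ separately. For the proximity part I would use the algebraic identity $1/f^{3}=(f^{(k)}/f)\cdot(1/F)$ together with the lemma on the logarithmic derivative $m(r,f^{(k)}/f)=S(r,f)$ to obtain $3\,m(r,1/f)\le m(r,1/F)+S(r,f)$. For the zero part I would compare multiplicities: a zero of $f$ of order $p$ forces a zero of $F$ of order $3p-k$ when $p\ge k$ and of order $2p\,(\ge 3p-k)$ when $p<k$, so that $3\,N(r,0;f)\le N(r,1/F)+k\,\overline N(r,0;f)$. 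Adding these two estimates gives the working inequality
\[
T(r,f)\le \tfrac13\,T(r,F)+\tfrac{k}{3}\,\overline N(r,0;f)+S(r,f).
\]

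It then remains to bound $T(r,F)$. Here I would apply the second fundamental theorem to $F$ with the three target values $0,1,\infty$, in its sharp form $T(r,F)\le \overline N(r,F)+\overline N(r,1/F)+\overline N\!\left(r,1/(F-1)\right)-N_{0}(r,1/F')+S(r,F)$. The gain that should ultimately produce a universal constant comes from the high multiplicities created by the monomial: every pole of $f$ of order $q$ is a pole of $F$ of order $3q+k$, so $\overline N(r,F)=\overline N(r,f)$ is only a small fraction of $N(r,F)$, and every zero of $f$ is a zero of $F$ of order at least $2p$, so the $f$-part of $\overline N(r,1/F)$ is likewise diminished. The $1$-points of $F$ contribute exactly $\overline N\!\left(r,1/(F-1)\right)\le N\!\left(r,1/(F-1)\right)$, which is the quantity appearing in the statement.

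The hard part will be the bookkeeping that turns these qualitative gains into the precise constant $6$, independent of $k$. Two terms resist a naive treatment. First, the zeros of $f^{(k)}$ that are not zeros of $f$ enter $\overline N(r,1/F)$; since each such point is a zero of $F'$ I would reabsorb it through the ramification term $N_{0}(r,1/F')$ of the sharp second main theorem rather than estimate it directly. Second, the contribution of the poles of $f$ to $\overline N(r,F)$, together with the spurious term $\tfrac{k}{3}\,\overline N(r,0;f)$ produced in the first step, must cancel for the final constant to be free of $k$; to force this cancellation I would introduce the auxiliary function $\Phi=F'/F-3\,f'/f=f^{(k+1)}/f^{(k)}-f'/f$, which satisfies $m(r,\Phi)=S(r,f)$ and has a simple pole of residue $-k$ at each pole of $f$, so that its pole and zero distribution pins down the weighted contributions of the poles and zeros of $f$ with the correct coefficients. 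Combining the working inequality above with the second–main–theorem estimate for $T(r,F)$, after these two reductions are carried out and the reduced counting functions are collected with their multiplicity weights, I expect the terms to assemble into $T(r,f)\le 6\,N\!\left(r,1/(F-1)\right)+S(r,f)$. The delicate point throughout is to keep every multiplicity exactly, since it is precisely the balance between the order $3q+k$ of the poles, the order at least $2p$ of the zeros, and the factor $3$ coming from the degree $\mu=3$ of the monomial that is responsible for the value of the constant.
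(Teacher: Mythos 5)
Your first step is sound: the identity $1/f^{3}=(f^{(k)}/f)\cdot(1/F)$ plus the lemma on the logarithmic derivative, together with the multiplicity comparison at zeros of $f$, does give $3T(r,f)\le T(r,F)+k\,\overline{N}(r,0;f)+S(r,f)$. The gap is in the second half, and it is exactly the difficulty that the known proofs of this theorem are engineered to avoid. When you apply the second fundamental theorem to $F=f^{2}f^{(k)}$ with the values $0,1,\infty$, the term $\overline{N}\left(r,1/F\right)$ contains the reduced counting function of the zeros of $f^{(k)}$ at which $f\neq 0$, and this quantity is not a fractional multiple of $T(r,f)$: it can be as large as $T(r,f^{(k)})+O(1)\le (1+k)T(r,f)+S(r,f)$, which swamps the factor $3$ you gained. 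Your proposed repair, absorbing these points into the ramification term $N_{0}\left(r,1/F'\right)$, cannot work for two reasons. First, $N_{0}$ by definition counts only zeros of $F'$ that are \emph{not} $0$-, $1$- or $\infty$-points of $F$, and the points in question are zeros of $F$, so they are excluded from $N_{0}$ no matter what. Second, they are typically not zeros of $F'$ at all: if $z_{0}$ is a simple zero of $f^{(k)}$ with $f(z_{0})\neq 0$, then $F'=2ff'f^{(k)}+f^{2}f^{(k+1)}$ gives $F'(z_{0})=f^{2}(z_{0})f^{(k+1)}(z_{0})\neq 0$. Likewise, the auxiliary function $\Phi=f^{(k+1)}/f^{(k)}-f'/f$ does satisfy $m(r,\Phi)=S(r,f)$ and does have residue $-k$ at each pole of $f$, but it also has poles at precisely the zeros of $f^{(k)}$ you need to eliminate, and the proposal supplies no mechanism by which its zero--pole distribution produces the claimed cancellation; as written, that step is a hope rather than an argument.

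This is why neither Huang--Gu (the source of Theorem D) nor the present paper runs the second main theorem on $F$ at all. The route used here for the generalization (Theorem 2.1) is a Milloux-type inequality, Lemma 3.6 of Lahiri--Dewan, namely $(p+n)T(r,f)\le \overline{N}(r,\infty;f)+\overline{N}(r,0;f)+pN_{k}(r,0;f)+\overline{N}(r,a;\psi)+S(r,f)$, whose right-hand side involves zeros of $f$ and the $1$-points of $\psi=f^{n}(f^{(k)})^{p}$ but never the zeros of $f^{(k)}$; this is combined with an auxiliary function built from $g'/g$ and $h'/h$ (Lemma 3.5), shown to be $\not\equiv 0$ by Laurent-expansion and residue analysis, whose poles are controlled by the $1$-points of the monomial. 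Huang--Gu's original proof of Theorem D is of the same nature. If you want to salvage your outline, the missing ingredient is such a Milloux-type estimate in place of the naive second-main-theorem step; without it, the term $\overline{N}(r,0;f^{(k)})$ is fatal to obtaining any universal constant.
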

A natural question was raised whether the above inequality still holds if the counting function in Theorem D is replaced by the corresponding reduced
counting function. In this direction, in 2009, Xu, Yi and Zhang (\cite{Xu}) proved the following theorem:
\begin{theoE}
 Let $f$ be a transcendental meromorphic function, and $k(\geq1)$ be a positive integer. If $N_{1}(r,0;f)=S(r,f)$, then
$$T(r,f)\leq 2\ol{N}\bigg(r,\frac{1}{f^{2}f^{(k)}-1}\bigg)+S(r,f).$$
\end{theoE}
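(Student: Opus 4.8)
The plan is to pass to the monomial $F:=f^{2}f^{(k)}$ and prove the equivalent estimate $T(r,f)\le 2\,\ol N\big(r,\tfrac{1}{F-1}\big)+S(r,f)$ by combining a second main theorem for $F$ with a comparison of $T(r,F)$ and $T(r,f)$. Here the hypothesis $N_{1}(r,0;f)=S(r,f)$ is read as $\ol N(r,0;f)=S(r,f)$, so zeros of $f$ are negligible, and the lemma on the logarithmic derivative gives $m\big(r,f^{(j)}/f\big)=S(r,f)$. First I would record the comparison: writing $f^{3}=F\cdot\big(f/f^{(k)}\big)$ and noting that the poles of $f^{(k)}/f$ lie only over the poles of $f$ (each of multiplicity $k$) and over the negligible zeros of $f$, one gets $T\big(r,f/f^{(k)}\big)=k\,\ol N(r,f)+S(r,f)$, hence
\[ 3\,T(r,f)\le T(r,F)+k\,\ol N(r,f)+S(r,f). \]
Since also $T(r,F)\le(3+k)\,T(r,f)+S(r,f)$ trivially, this yields $S(r,F)=S(r,f)$, so all error terms may be measured against $f$.

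Next I would apply the second main theorem to $F$ at $0,1,\infty$. The poles of $F$ sit over the poles of $f$ and the zeros of $F$ over the (negligible) zeros of $f$ together with the zeros of $f^{(k)}$, so this reads $T(r,F)\le \ol N(r,0;f^{(k)})+\ol N\big(r,\tfrac1{F-1}\big)+\ol N(r,f)+S(r,f)$. Inserting this into the comparison gives the working inequality
\[ 3\,T(r,f)\le \ol N(r,0;f^{(k)})+\ol N\big(r,\tfrac1{F-1}\big)+(k+1)\,\ol N(r,f)+S(r,f). \]

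The decisive step is then to absorb the two parasitic terms $\ol N(r,0;f^{(k)})$ and $\ol N(r,f)$ into $\ol N\big(r,\tfrac1{F-1}\big)$ and one copy of $T(r,f)$. For this I would use the auxiliary function
\[ \Psi:=\frac{F'}{F-1}-3\,\frac{f'}{f}=\frac{F'}{F(F-1)}+\Big(\frac{f^{(k+1)}}{f^{(k)}}-\frac{f'}{f}\Big), \]
which satisfies $m(r,\Psi)=S(r,f)$. The virtue of the splitting is a residue cancellation: at a zero of $f^{(k)}$ of order $s$ (off the zeros of $f$) the two summands have simple poles of residues $-s$ and $+s$, so $\Psi$ is holomorphic there; consequently $\Psi$ has only simple poles, over the $1$-points of $F$ (residue the multiplicity) and over the poles of $f$ (residue $-k$, independent of the order). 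Playing $\Psi$ against $F'/\big(F(F-1)\big)$, whose poles sit exactly over the zeros and the $1$-points of $F$, and performing a local analysis at the $1$-points of $F$ that distinguishes the simple from the multiple ones, should bound $\ol N(r,0;f^{(k)})+\ol N(r,f)$ by $\ol N\big(r,\tfrac1{F-1}\big)$ up to $S(r,f)$; substituting into the working inequality and moving a single $T(r,f)$ to the left then delivers $2\,T(r,f)\le 2\,\ol N\big(r,\tfrac1{F-1}\big)+S(r,f)$.

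The hard part will be exactly this last step. The obstruction is structural: every auxiliary function in play has proximity $S(r,f)$, so passing to characteristics only recirculates the same counting functions and yields tautologies; the circularity must be broken by injecting the true size $T(r,f)$ from the comparison and by exploiting the precise residue data of $\Psi$ at the $1$-points of $F$. Making the multiplicity bookkeeping sharp enough to reach the constant $2$ — rather than the weaker $6$ of Theorems~B and~D — is where the hypothesis $N_{1}(r,0;f)=S(r,f)$ has to be spent in full.
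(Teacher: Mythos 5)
You should first note that the paper does not prove Theorem E at all: it is quoted from \cite{Xu} as background, so your proposal can only be measured against its own internal logic and the paper's toolkit. Measured that way, it has a genuine gap, and the gap is exactly where you place it: the ``decisive step'' is never carried out, and your function $\Psi$ cannot carry it out. The only information about $\Psi$ that Nevanlinna calculus can see is that $m(r,\Psi)=S(r,f)$ and that $\Psi$ has only simple poles, lying over the $1$-points of $F$ and the poles of $f$; the residue values ($m$ at a $1$-point of multiplicity $m$, $-k$ at a pole of $f$) are invisible to counting functions. Consequently every first-main-theorem manipulation of $\Psi$, or of $F'/(F(F-1))$, returns linear combinations of $\ol{N}\big(r,\tfrac{1}{F-1}\big)$, $\ol{N}(r,f)$ and $\ol{N}(r,0;f^{(k)})$ with coefficients that do not grow with $k$, whereas your working inequality carries the $k$-dependent term $(k+1)\ol{N}(r,f)$. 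This mismatch is fatal to the arithmetic: even if one grants your absorption claim $\ol{N}(r,0;f^{(k)})+\ol{N}(r,f)\le\ol{N}\big(r,\tfrac{1}{F-1}\big)+S(r,f)$, substituting it gives $3T(r,f)\le 2\ol{N}\big(r,\tfrac{1}{F-1}\big)+k\ol{N}(r,f)+S(r,f)$, and since Theorem E imposes no hypothesis whatever on the poles of $f$, the only available estimate $\ol{N}(r,f)\le T(r,f)$ yields $(3-k)T(r,f)\le 2\ol{N}\big(r,\tfrac{1}{F-1}\big)+S(r,f)$, which is vacuous for every $k\ge 3$. To close the argument you would instead need $\ol{N}(r,0;f^{(k)})+(k+1)\ol{N}(r,f)\le\ol{N}\big(r,\tfrac{1}{F-1}\big)+T(r,f)+S(r,f)$, an inequality of the same depth as the theorem itself; the circularity you warn about is real, and nothing in the proposal breaks it.

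The second problem is strategic: under the reading $N_{1}(r,0;f)=\ol{N}(r,0;f)$ (the paper's Definition 1.4, which you also adopt), the statement is a three-line corollary of the Lahiri--Dewan inequality that the paper quotes as Lemma \ref{lda}, so the entire apparatus of the comparison step, the second main theorem for $F$, and the auxiliary function $\Psi$ is unnecessary. Indeed, apply Lemma \ref{lda} with $\alpha\equiv 1$, $n=2$, $p=1$, $a\equiv 1$, $\psi=f^{2}f^{(k)}$:
\begin{eqnarray*}
3T(r,f)&\le&\ol{N}(r,\infty;f)+\ol{N}(r,0;f)+N_{k}(r,0;f)+\ol{N}\bigg(r,\frac{1}{f^{2}f^{(k)}-1}\bigg)+S(r,f).
\end{eqnarray*}
The hypothesis gives $\ol{N}(r,0;f)=N_{1}(r,0;f)=S(r,f)$ and $N_{k}(r,0;f)\le kN_{1}(r,0;f)=S(r,f)$, while trivially $\ol{N}(r,\infty;f)\le T(r,f)$; hence
\begin{eqnarray*}
2T(r,f)&\le&\ol{N}\bigg(r,\frac{1}{f^{2}f^{(k)}-1}\bigg)+S(r,f),
\end{eqnarray*}
which is even stronger (constant $\tfrac12$) than the constant $2$ you are trying to reach. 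This is precisely how the paper itself spends Lemma \ref{lda} in its proof of Theorem \ref{th1.1}. Your preparatory steps are correct as far as they go, but they steer the proof into a dead end; the efficient entry point for results of this type is the monomial inequality, not the second main theorem combined with logarithmic-derivative auxiliary functions.
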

Later, in 2011, removing the restrictions on zeros of $f$, Xu, Yi and Zhang (\cite{xu2}) proved the following theorem:
\begin{theoF}
 Let $f$ be a transcendental meromorphic function, and $k(\geq1)$ be a positive integer. Then
$$T(r,f)\leq M\ol{N}\bigg(r,\frac{1}{f^{2}f^{(k)}-1}\bigg)+S(r,f),$$
where $M$ is $6$ if $k=1$, or $k\geq3$ and $M=10$ if $k=2$.
\end{theoF}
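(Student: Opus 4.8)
The plan is to set $F=f^{2}f^{(k)}$ and to transfer the problem into a Nevanlinna estimate for $F$, whose $1$-points are exactly the zeros of $F-1$. I would first record the normalization $S(r,F)=S(r,f)$: the logarithmic derivative lemma gives $m(r,f^{(k)}/f)=S(r,f)$, hence $m(r,F)\le 3\,m(r,f)+S(r,f)$, while a pole of $f$ of order $p$ is a pole of $F$ of order $3p+k$, so $N(r,F)=3\,N(r,f)+k\,\ol{N}(r,f)$; these are the standard estimates for a differential monomial and show that the error terms for $F$ and for $f$ coincide. The engine of the argument is the identity
$$\frac{F'}{F}=2\,\frac{f'}{f}+\frac{f^{(k+1)}}{f^{(k)}},$$
which decomposes the logarithmic derivative of $F$ into logarithmic derivatives of $f$ and of $f^{(k)}$, each of proximity $S(r,f)$.

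Next I would introduce the auxiliary function
$$\varphi=\frac{F'}{F}-\frac{F'}{F-1}=\frac{-F'}{F(F-1)}.$$
Being a difference of logarithmic derivatives, it satisfies $m(r,\varphi)=S(r,f)$, and a local analysis shows that its poles are all simple and occur precisely at the zeros and the $1$-points of $F$, the poles of $F$ being cancelled; hence
$$T(r,\varphi)=\ol{N}(r,0;F)+\ol{N}\!\left(r,\frac{1}{F-1}\right)+S(r,f).$$
In parallel I would apply the second main theorem to $F$ for the targets $0,1,\infty$ and to $f$ for the targets $0,\infty$, and then use the first main theorem $T(r,f)=m(r,1/f)+N(r,1/f)+O(1)$ fed through the decomposition above. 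The quantity to beat is $\ol{N}(r,0;F)$: a zero of $f$ of order $m$ is a zero of $F$ of order $3m-k$ when $m>k$ and of order $2m$ when $m\le k$ (if $f^{(k)}$ does not vanish there), while $f^{(k)}$ may contribute further zeros not coming from $f$. Converting these multiplicity relations into an inequality between $\ol{N}(r,0;F)$, $\ol{N}(r,0;f)$ and the ``new'' zeros of $f^{(k)}$, and combining with the displays above, isolates $T(r,f)$ on the left against a fixed multiple of $\ol{N}(r,1/(F-1))$. As a consistency check, in the regime $\ol{N}(r,0;f)=N_{1}(r,0;f)=S(r,f)$ this scheme collapses to Theorem E with the better constant $2\le M$.

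The hard part is the multiplicity bookkeeping that produces the sharp constant $M$, namely bounding the zeros of $f^{(k)}$ that are not zeros of $f$. This requires a second, Wronskian- or $N_{2}$-type auxiliary function to count those ``new'' zeros, and it is here that the regimes of $k$ separate: for $k=1$ and $k\ge 3$ the order $3m-k$ at a zero of $f$ leaves enough slack that the zero and pole contributions are absorbed with weight $6$, whereas for $k=2$ the borderline orders at low-multiplicity zeros of $f$ (where $f''$ need not vanish) align unfavorably with the counting and force the coarser constant $M=10$. I expect the case $k=2$, together with the precise accounting of which zeros of $f^{(k)}$ are genuinely new, to be the principal obstacle; the remainder is a careful but routine deployment of the two main theorems and the logarithmic derivative lemma.
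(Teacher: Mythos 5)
Your proposal is not a proof but a plan whose hard part is explicitly deferred, and the deferred part is the entire substance of the theorem. What you actually establish is correct but routine: the normalization $S(r,f^{2}f^{(k)})=S(r,f)$, the logarithmic-derivative decomposition, and the fact that $\varphi=-F'/(F(F-1))$ (with $F=f^{2}f^{(k)}$) has only simple poles, located at the zeros and $1$-points of $F$, so that
$$T(r,\varphi)=\ol{N}(r,0;F)+\ol{N}\left(r,\frac{1}{F-1}\right)+S(r,f).$$
But from here the second main theorem applied to $F$ gives $T(r,F)\leq \ol{N}(r,0;F)+\ol{N}(r,\infty;F)+\ol{N}(r,1;F)+S(r,f)$, in which $\ol{N}(r,0;F)$ — fed by the zeros of $f$ and of $f^{(k)}$ — sits on the \emph{right-hand} side and can be as large as a multiple of $T(r,f)$. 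Nothing in your scheme cancels it; saying that this ``requires a second, Wronskian- or $N_{2}$-type auxiliary function'' names the gap rather than fills it. Likewise, your consistency check (constant $2$ when $N_{1}(r,0;f)=S(r,f)$) cannot be verified because no constant is ever actually produced, and your explanation of why $k=2$ forces $M=10$ is a guess, not an argument. Note also that you never address poles of $f$ at all, whereas any unconditional proof must handle simple poles with care.

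For calibration: this paper does not prove Theorem F — it is quoted from Xu, Yi and Zhang \cite{xu2} — but the paper's proof of its own Theorem \ref{th1.1} (the generalization to $f^{q_0}(f^{(k)})^{q_k}$ under the no-simple-pole hypothesis) shows exactly what machinery is needed, and it is the same machinery as in \cite{hg}, \cite{xu2}, \cite{KS}. One needs: (i) the very specific auxiliary function
$$F(z)= 2\left(\frac{g'}{g}\right)^{2} + 3\left(\frac{g'}{g}\right)'-2\left(\frac{h'}{h}\right)' + \left(\frac{h'}{h}\right)^{2}-4\,\frac{g'h'}{gh},\qquad g=f^{2}f^{(k)}-1,\quad h=\frac{g'}{f},$$
(ii) a lengthy case analysis proving $F\not\equiv 0$ (this is where the genuine difficulties live, and where the structural split between parameter ranges arises); (iii) the local computation that multiple zeros of $f$ are zeros of $\beta^{2}F$ of multiplicity at least $2q-2$ while the poles of $\beta^{2}F$ come only from zeros of $g$ with multiplicity at most $4$, yielding $2N(r,\frac{1}{f})-2\ol{N}(r,\frac{1}{f})\leq 4\ol{N}(r,\frac{1}{g})+S(r,f)$; and (iv) a Lahiri--Dewan type inequality (the paper's Lemma \ref{lda}) to convert these counting estimates into a bound on $T(r,f)$ with the stated constant. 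None of (i)--(iv) appears in your proposal, so what remains to be done after your preliminaries is not ``careful but routine'' — it is the theorem.
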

Recently, Karmakar and Sahoo(\cite{KS}) further improved the Theorem F and obtained the following result:
\begin{theoG}(\cite{KS})
 Let $f$ be a transcendental meromorphic function, and $n(\geq 2)$, $k(\geq1)$ be any integers, then
$$T(r,f)\leq \frac{6}{2n-3}\ol{N}\bigg(r,\frac{1}{f^{n}f^{(k)}-1}\bigg)+S(r,f).$$
\end{theoG}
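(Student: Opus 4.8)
The plan is to reduce the statement to Hayman's fundamental inequality applied to a single power of $f$, and then to extract the constant $\frac{6}{2n-3}$ from the large multiplicities that zeros and poles of $f$ impose on $F:=f^{n}f^{(k)}$. I would begin with the elementary bookkeeping. A pole of $f$ of order $p$ gives $F$ a pole of order $(n+1)p+k$, so $\overline N(r,F)=\overline N(r,f)$; a zero of $f$ of order $m$ gives $F$ a zero of order at least $nm$; and since the lemma on the logarithmic derivative yields $m(r,f^{(k)}/f)=S(r,f)$, one obtains $T(r,F)\le(n+1+k)\,T(r,f)+S(r,f)$ together with a matching lower bound, so that $F$ is transcendental and $S(r,F)=S(r,f)$ may be used freely.

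The decisive step uses the special structure for $k=1$, where $F=\frac{1}{n+1}\,(f^{n+1})'$. Setting $g:=\frac{1}{n+1}f^{n+1}$, so that $g'=F$, Hayman's fundamental inequality reads
\[
T(r,g)\le \overline N(r,g)+N\!\left(r,\tfrac1g\right)+N\!\left(r,\tfrac{1}{g'-1}\right)-N\!\left(r,\tfrac{1}{g''}\right)+S(r,g).
\]
Now $T(r,g)=(n+1)T(r,f)+O(1)$, $\overline N(r,g)=\overline N(r,f)$, $N(r,1/g)=(n+1)N(r,1/f)$, and, crucially, at each zero of $f$ of order $m$ the function $g''$ vanishes to order $(n+1)m-2$, so that $N(r,1/g'')\ge(n+1)N(r,1/f)-2\,\overline N(r,1/f)$. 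Substituting these, the full counting function $N(r,1/f)$ cancels and I am left with
\[
(n+1)\,T(r,f)\le \overline N(r,f)+2\,\overline N\!\left(r,\tfrac1f\right)+N\!\left(r,\tfrac{1}{F-1}\right)+S(r,f).
\]
For $k\ge2$ the clean reduction $F=(f^{n+1})'/(n+1)$ is unavailable, so I would instead invoke the Milloux-type inequality for the monomial $f^{n}f^{(k)}$, which plays exactly the same role and produces the same cancellation of $N(r,1/f)$.

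What remains --- and what I expect to be the main obstacle --- is to convert the last inequality into $T(r,f)\le\frac{6}{2n-3}\,\overline N(r,1/(F-1))+S(r,f)$, which requires two further, more delicate reductions together with a numerical optimisation. First, I must replace $N(r,1/(F-1))$ by the reduced function $\overline N(r,1/(F-1))$; a $1$-point of $F$ of multiplicity $\ge2$ is a common zero of $F-1$ and $F'=f^{\,n-1}\!\left(nf'f^{(k)}+ff^{(k+1)}\right)$, and since $f\ne0,\infty$ at such a point these are governed by the zeros of the auxiliary differential polynomial $nf'f^{(k)}+ff^{(k+1)}$, whose counting function is $O(T(r,f))$ with an explicit coefficient. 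Second, I must absorb $\overline N(r,f)$ and $2\overline N(r,1/f)$ by feeding the high multiplicities (order $\ge n+1$ at poles, order $\ge n$ at zeros) back through the second main theorem for $f$, bounding these reduced functions by a fraction of $T(r,f)$ plus a multiple of the target. The denominator $2n-3$ and the residual constant $6$ emerge precisely from balancing these multiplicity estimates, and I expect the case $k=2$ --- which already forced a different constant in Theorem F --- together with the configurations of very small multiplicity to require separate, careful treatment.
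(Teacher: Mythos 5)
Your first step is correct, but it is precisely Hayman's 1959 argument, and it stops exactly where Theorem G begins. Applying Hayman's inequality
\[
T(r,g)\le \ol{N}(r,g)+N\Big(r,\frac{1}{g}\Big)+N\Big(r,\frac{1}{g'-1}\Big)-N\Big(r,\frac{1}{g''}\Big)+S(r,g)
\]
to $g=\frac{1}{n+1}f^{n+1}$ does give
\[
(n+1)T(r,f)\le \ol{N}(r,f)+2\,\ol{N}\Big(r,\frac{1}{f}\Big)+N\Big(r,\frac{1}{f^{n}f'-1}\Big)+S(r,f),
\]
but this carries the \emph{full} counting function of the $1$-points and the terms $\ol{N}(r,f)+2\ol{N}(r,1/f)$, for which nothing better than the trivial bound $3T(r,f)$ is available: it is $f^{n}f^{(k)}$, not $f$ itself, whose zeros and poles have forced multiplicity, so there is no multiplicity to exploit in these two terms. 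Consequently for $n=2$ your inequality degenerates to $0\le N(r,1/(F-1))+S(r,f)$, i.e.\ it contains no information, yet $n=2$ is the hardest case of Theorem G (constant $6$). Everything the theorem actually asserts --- the reduced counting function, the constant $\frac{6}{2n-3}$, validity down to $n=2$, uniformity in $k$ --- lives in the part of your plan that you describe as ``balancing'' but never execute.

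Moreover, the two mechanisms you propose for that part fail quantitatively, not merely in detail. (i) Multiple $1$-points of $F=f^{n}f^{(k)}$ are indeed zeros of $Q:=nf'f^{(k)}+ff^{(k+1)}$ (since $F'=f^{n-1}Q$ and $f\ne 0,\infty$ at a $1$-point), but the only estimate this yields is $N(r,1/(F-1))-\ol{N}(r,1/(F-1))\le N(r,0;Q)\le T(r,Q)+O(1)$, and the provable bound on $T(r,Q)$ is of size $2T(r,f)+(k+1)\ol{N}(r,f)+S(r,f)$, i.e.\ a coefficient at least $2$ and growing with $k$. Injecting such a term into the right-hand side makes the inequality vacuous for all small $n$, produces constants unrelated to $\frac{6}{2n-3}$ for large $n$, and introduces a $k$-dependence that Theorem G does not have. (ii) No application of the second main theorem to $f$ can absorb $\ol{N}(r,f)+2\ol{N}(r,1/f)$, for the reason noted above. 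The genuine proofs (Karmakar--Sahoo for Theorem G itself, and this paper for Theorem \ref{th1.1}, which generalizes it under the no-simple-pole hypothesis) rest on two ingredients absent from your sketch: a Milloux-type lemma that produces the \emph{reduced} counting function $\ol{N}(r,a;\psi)$ directly (the Lahiri--Dewan lemma, Lemma \ref{lda}), so that the passage from $N$ to $\ol{N}$ at $1$-points never has to be paid for; and the auxiliary function $F$ of \eqref{eq0.1}, built from $g'/g$ and $h'/h$ with the calibrated coefficients $2,3,-2,1,-4$, whose nonvanishing $F\not\equiv 0$ is the difficult lemma and whose decisive property is $T(r,\beta^{2}F)\le 4\ol{N}(r,1/g)+S(r,f)$; it is this estimate that recycles the multiple zeros of $f$ into multiples of $\ol{N}(r,1/g)$ and is the sole source of the constant $6$. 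Without these (or equivalent) ingredients your outline cannot be completed into a proof of the stated bound.
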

From the above discussions the following question is obvious:
\begin{question} Is it possible to replace $f^{n}f^{(k)}$, where $n(\geq 2)$, $k(\geq1)$ be any integers, in the above theorem by $(f)^{q_{0}}(f^{(k)})^{q_{k}}$, where $q_{0}(\geq 2), q_{k}(\geq 1)$ are $k(\geq1)$ non-negative integers?
\end{question}
The aim of this paper is to answer above question by giving some restriction on the poles of $f$.
\section{Main Results}
\begin{theo}\label{th1.1} Let $f$ be a transcendental meromorphic function such that it has no simple pole. Also, let  $q_{0}(\geq 2), q_{k}(\geq 1)$ are $k(\geq1)$ integers. Then
\begin{eqnarray}
\nonumber  T(r,f)&\leq& \frac{6}{2q_{0}-3}\ol{N}\left(r,\frac{1}{(f)^{q_{_{0}}}(f^{(k)})^{q_{_{k}}}-1}\right)+S(r,f).
\end{eqnarray}
\end{theo}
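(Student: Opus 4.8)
The plan is to analyse the monomial $F:=f^{q_{0}}(f^{(k)})^{q_{k}}$ directly, extracting the constant $\tfrac{6}{2q_{0}-3}$ from a comparison of multiplicities in the Second Main Theorem. First I would record the local data: if $f$ has a pole of order $p$ then $F$ has a pole of order $\mu p+\mu_{*}$ with $\mu=q_{0}+q_{k}$ and $\mu_{*}=kq_{k}$, while if $f$ has a zero of order $m$ then $F$ has a zero of order at least $q_{0}m$. Since $f$ has no simple pole, every pole has order $\geq 2$, so $\ol{N}(r,f)\leq\tfrac12 N(r,f)$; this factor $\tfrac12$ is precisely what will upgrade a ``$q_{0}$'' to a ``$2q_{0}$'' at the end. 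I would also note the two-sided comparison $T(r,F)=\mu\,T(r,f)+O\!\big(\ol{N}(r,f)+\ol{N}(r,0;f)\big)+S(r,f)$, obtained from $F=f^{\mu}(f^{(k)}/f)^{q_{k}}$ together with the lemma on the logarithmic derivative, so that $T(r,F)$ and $\mu T(r,f)$ differ only by pole/zero terms that must ultimately be absorbed.

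The engine is the auxiliary function $g:=F'/(F-1)$. By the logarithmic derivative lemma $m(r,g)=S(r,f)$, and $g$ has only simple poles, located exactly at the (reduced) $1$-points of $F$ and at the poles of $f$; crucially, $g$ has no pole at a zero of $F$. Hence $T(r,g)=\ol{N}\!\left(r,\tfrac{1}{F-1}\right)+\ol{N}(r,f)+S(r,f)$. The decisive point is the lower estimate $T(r,g)\geq N(r,1/g)$: at a zero of $F$ of multiplicity $s$ the function $g$ vanishes to order $s-1$, so the zeros of $F$ re-enter $N(r,1/g)$ with the full multiplicity surplus. A zero of $f$ of order $m$ gives $s\geq q_{0}m$, hence a contribution $\geq q_{0}m-1$, and therefore $(q_{0}-1)\ol{N}(r,0;f)\leq \ol{N}\!\left(r,\tfrac{1}{F-1}\right)+\ol{N}(r,f)+S(r,f)$.

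The main obstacle is exactly the factor $(f^{(k)})^{q_{k}}$: the zeros of $f^{(k)}$ that are not zeros of $f$ are in general as abundant as $T(r,f)$ and cannot be bounded by the right-hand side, so any route through $\ol{N}(r,0;F)$ on the \emph{cost} side of the Second Main Theorem is hopeless. The construction of $g$ is designed to defeat this: these points are zeros of $F$, so they never produce poles of $g$, and in the lower bound for $N(r,1/g)$ they occur with the harmless coefficient $q_{k}-1\geq 0$ and may simply be discarded. This is why the final constant should depend on $q_{0}$ alone, not on $q_{k}$ or $k$, and why the hypothesis is imposed only on the poles of $f$ and not on its zeros or on $f^{(k)}$.

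Finally I would assemble the pieces. Feeding the zero estimate back together with the comparison $T(r,F)=\mu T(r,f)+O(\cdots)+S(r,f)$, and promoting the reduced pole count $\ol{N}(r,f)$ to a genuine multiple of $N(r,f)$ through the no-simple-pole bound $\ol{N}(r,f)\leq\tfrac12 N(r,f)$, I expect to arrive at an inequality of the shape $(2q_{0}-3)\,T(r,f)\leq 6\,\ol{N}\!\left(r,\tfrac{1}{F-1}\right)+S(r,f)$, which is the assertion. Heuristically the constant $6$ is the residue of the Nevanlinna counting (three target values entering through the reduced counting functions of $F$ and of $g$), while $2q_{0}-3$ records the balance of the degree $q_{0}$, the doubling supplied by the absence of simple poles, and the three values subtracted by the Second Main Theorem. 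I expect the genuinely delicate step to be this last assembly: keeping every multiplicity sharp so that the coefficient of $T(r,f)$ lands at exactly $2q_{0}-3$ rather than something weaker, and checking that all error terms are truly $S(r,f)$.
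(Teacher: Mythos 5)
Your preparatory material is sound: the local multiplicity counts, the two-sided comparison of $T(r,F)$ with $\mu T(r,f)$, and the analysis of $g=F'/(F-1)$ (simple poles exactly at the $1$-points of $F$ and at the poles of $f$, a zero of order $s-1$ at each zero of $F$ of order $s$) are all correct, and they do give
\[
(q_0-1)\,N(r,0;f)\;\le\; \ol{N}\left(r,\frac{1}{F-1}\right)+\ol{N}(r,f)+S(r,f).
\]
But the proposal stops exactly where the theorem begins. None of the three facts you record bounds $T(r,f)$ from above by counting functions: for a zero-free entire $f$ (which satisfies the hypothesis vacuously) your zero estimate and the no-simple-pole inequality are vacuous, and the comparison $T(r,F)=\mu T(r,f)+O(\cdots)+S(r,f)$ relates two characteristic functions without forcing $F-1$ to have a single zero. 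So the final ``assembly'' cannot be carried out from the pieces you have; what is indispensable is a Milloux/Second-Main-Theorem type inequality in which $\ol{N}\left(r,\frac{1}{F-1}\right)$ occurs as an upper bound for a multiple of $T(r,f)$, and this ingredient never appears in your plan. In the paper that role is played by the Lahiri--Dewan lemma (the paper's Lemma 3.6),
\[
(q_0+q_k)T(r,f)\le \ol{N}(r,\infty;f)+\ol{N}(r,0;f)+q_kN_k(r,0;f)+\ol{N}\left(r,\frac{1}{F-1}\right)+S(r,f),
\]
which is the actual engine of the proof; the paper's auxiliary functions $h$, $\beta$ and its nonvanishing lemma serve the same purpose as your $g$ (multiplicity control at zeros and poles), but they supplement that lemma rather than replace it.

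Moreover, the reason you give for refusing the Second Main Theorem route is a misconception, and it is precisely what blocks you. Applying the SMT to $F$ at $0,1,\infty$ must be paired with the First Fundamental Theorem and the logarithmic derivative lemma: from $\mu\, m(r,1/f)\le m(r,1/F)+S(r,f)$ and $m(r,1/F)=T(r,F)-N(r,0;F)+O(1)$, the zeros of $F$ enter the final inequality only through the difference $\ol{N}(r,0;F)-N(r,0;F)\le 0$. Hence the zeros of $f^{(k)}$ you feared are harmless (each contributes at most $1-q_k\le 0$), while each zero of $f$ of order $m$ contributes at most $1-q_0m\le-(q_0-1)m$, producing the decisive term $-(q_0-1)N(r,0;f)$. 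Carrying this out gives
\[
(q_0+q_k)T(r,f)\le \ol{N}(r,f)+(q_k+1)N(r,0;f)+\ol{N}\left(r,\frac{1}{F-1}\right)+S(r,f),
\]
and now $N(r,0;f)\le T(r,f)+O(1)$ together with your own observation $\ol{N}(r,f)\le\frac{1}{2}N(r,f)\le\frac{1}{2}T(r,f)$ yield $(2q_0-3)T(r,f)\le 2\,\ol{N}\left(r,\frac{1}{F-1}\right)+S(r,f)$: the theorem, with a constant even better than $6$, and with your function $g$ not needed at all. So the repair is not to avoid $\ol{N}(r,0;F)$ but to pair it with $-N(r,0;F)$; either do that computation directly or invoke the Lahiri--Dewan lemma as the paper does.
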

\begin{cor}
  Clearly, Theorem \ref{th1.1} generalies Theorem G for transcendental meromorphic function which has no simple pole.
\end{cor}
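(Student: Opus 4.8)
The plan is to derive the Corollary directly from Theorem \ref{th1.1} by a suitable specialization of the differential monomial, rather than re-proving any value-distribution inequality. Theorem G concerns the monomial $f^{n}f^{(k)}$ with $n(\geq 2)$ and $k(\geq 1)$, while Theorem \ref{th1.1} concerns the more general monomial $(f)^{q_{0}}(f^{(k)})^{q_{k}}$ with $q_{0}(\geq 2)$ and $q_{k}(\geq 1)$. The key observation is that the former is exactly the special case of the latter obtained by setting $q_{0}=n$ and $q_{k}=1$.

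First I would verify that the hypotheses of Theorem \ref{th1.1} are met under this choice. The corollary restricts attention to transcendental meromorphic $f$ having no simple pole, which is precisely the standing assumption of Theorem \ref{th1.1}; moreover $q_{0}=n\geq 2$ matches the requirement $q_{0}\geq 2$, and $q_{k}=1\geq 1$ matches $q_{k}\geq 1$. Hence Theorem \ref{th1.1} applies verbatim to every such $f$, with no additional work.

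Next I would record the substitution. With $q_{0}=n$ and $q_{k}=1$ the monomial collapses to
$$(f)^{q_{0}}(f^{(k)})^{q_{k}}=f^{n}f^{(k)},$$
and the leading constant becomes
$$\frac{6}{2q_{0}-3}=\frac{6}{2n-3}.$$
Feeding these into the conclusion of Theorem \ref{th1.1} gives precisely
$$T(r,f)\leq \frac{6}{2n-3}\ol{N}\bigg(r,\frac{1}{f^{n}f^{(k)}-1}\bigg)+S(r,f),$$
which is exactly the assertion of Theorem G.

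The only substantive point to flag — and what I regard as the true content of the corollary rather than a computational obstacle — is the scope of the generalization. Theorem G holds for \emph{every} transcendental meromorphic function, whereas the reduction above recovers its inequality only under the extra hypothesis that $f$ has no simple pole. Thus Theorem \ref{th1.1} does not dominate Theorem G outright; it widens the admissible class of monomials from $f^{n}f^{(k)}$ to $(f)^{q_{0}}(f^{(k)})^{q_{k}}$ at the cost of restricting the pole structure of $f$. The wording of the corollary reflects exactly this trade-off, so once the specialization $q_{0}=n,\ q_{k}=1$ is recorded the conclusion is immediate and no further estimate is required.
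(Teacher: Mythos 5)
Your proposal is correct and matches the paper's (implicit) justification: the corollary is immediate from Theorem \ref{th1.1} by the specialization $q_{0}=n$, $q_{k}=1$, which turns $(f)^{q_{0}}(f^{(k)})^{q_{k}}$ into $f^{n}f^{(k)}$ and the constant $\frac{6}{2q_{0}-3}$ into $\frac{6}{2n-3}$, recovering Theorem G's inequality for the class of functions without simple poles. You also correctly flag the scope restriction — that Theorem \ref{th1.1} recovers Theorem G only on this restricted class, not in full generality — which is precisely what the corollary's wording asserts.
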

\begin{rem}
  Is it possiblie to remove the condition that \enquote{$f$ has no simple pole} when $q_{k}\geq 2$?
\end{rem}
\section{Lemmas}
For a transcendental meromorphic function $f$, we define $$M[f]:=(f)^{q_{_{0}}}(f')^{q_{_{1}}}\ldots(f^{(k)})^{q_{_{k}}}$$ be a differential monomial. In this paper, we assume that $q_{0}(\geq1)$ and $q_{k}(\geq1)$.
\begin{lem}\label{lem1} For a non constant meromorphic function $g$, we obtain
$$N\left(r,\frac{g'}{g}\right)-N\left(r,\frac{g}{g'}\right)=\ol{N}(r,g)+N\left(r,\frac{1}{g}\right)-N\left(r,\frac{1}{g'}\right).$$
\end{lem}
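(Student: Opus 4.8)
The plan is to treat this as a purely local identity comparing, point by point, the orders of the logarithmic derivative $g'/g$ against those of its reciprocal $g/g'$; no deep Nevanlinna machinery is needed, only the convention that $N(r,\cdot)$ counts poles with multiplicity and the elementary behaviour of $g'/g$ at the zeros and poles of $g$. First I would record the structure of $g'/g$. If $g$ has a zero of order $m$ at $z_{0}$, writing $g=(z-z_{0})^{m}u$ with $u$ holomorphic and nonvanishing gives $g'/g=m/(z-z_{0})+u'/u$, a simple pole; similarly, at a pole of order $p$ one gets $g'/g=-p/(z-z_{0})+v'/v$, again a simple pole; at every other point $g'/g$ is holomorphic. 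Hence the poles of $g'/g$ are all simple and occur precisely at the zeros and poles of $g$, which yields the first auxiliary identity
\[
N\left(r,\frac{g'}{g}\right)=\ol N(r,g)+\ol N\left(r,\frac1g\right).
\]

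Next I would analyse $g/g'=(g'/g)^{-1}$. Since $g'/g$ has simple poles exactly at the zeros and poles of $g$, its reciprocal $g/g'$ has simple zeros at those points and therefore no pole there. Consequently the poles of $g/g'$ arise only from the zeros of $g'$ that are neither zeros nor poles of $g$: at such a point $g$ is holomorphic and nonzero while $g'$ has a zero of some order $s\ge 1$, so $g/g'$ has a pole of order exactly $s$. Writing $N_{0}(r,1/g')$ for the counting function of these \emph{free} zeros of $g'$ (with multiplicity), this identifies $N(r,g/g')=N_{0}(r,1/g')$.

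I would then decompose $N(r,1/g')$ according to the location of the zeros of $g'$. At a pole of $g$ the function $g'$ has a pole and contributes nothing to $N(r,1/g')$; at a zero of $g$ of order $m$ the function $g'$ has a zero of order $m-1$, so these points contribute $N(r,1/g)-\ol N(r,1/g)$ in total; the remaining zeros of $g'$ are precisely the free ones. This gives
\[
N\left(r,\frac1{g'}\right)=\left(N\left(r,\frac1g\right)-\ol N\left(r,\frac1g\right)\right)+N_{0}\left(r,\frac1{g'}\right),
\]
whence $N(r,g/g')=N_{0}(r,1/g')=N(r,1/g')-N(r,1/g)+\ol N(r,1/g)$. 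Subtracting this from the first identity and cancelling the two copies of $\ol N(r,1/g)$ produces
\[
N\left(r,\frac{g'}{g}\right)-N\left(r,\frac{g}{g'}\right)=\ol N(r,g)+N\left(r,\frac1g\right)-N\left(r,\frac1{g'}\right),
\]
which is the asserted equality. The step I expect to require the most care is the second one: one must verify that at the zeros and poles of $g$ the quotient $g/g'$ contributes \emph{zeros} rather than poles, so that only the free zeros of $g'$ survive as poles of $g/g'$; this is the single place where an erroneous cancellation could slip in, and it is also what forces the multiplicities on the right-hand side to combine exactly as stated.
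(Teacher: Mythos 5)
Your proof is correct, and it is essentially the argument the paper relies on: the paper gives no proof of its own, merely citing formula (12) of Jiang and Huang, and that formula rests on exactly your local counting --- the logarithmic derivative $g'/g$ has only simple poles, located precisely at the zeros and poles of $g$, so that $N\left(r,\frac{g'}{g}\right)=\overline{N}(r,g)+\overline{N}\left(r,\frac{1}{g}\right)$, while the poles of $g/g'$ come only from the zeros of $g'$ away from the zeros and poles of $g$, so that $N\left(r,\frac{g}{g'}\right)=N\left(r,\frac{1}{g'}\right)-N\left(r,\frac{1}{g}\right)+\overline{N}\left(r,\frac{1}{g}\right)$, and subtraction gives the identity. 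Your write-up has the minor advantage of being self-contained where the paper defers to a reference.
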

\begin{proof} The proof is same as  the formula (12) of (\cite{jh}).
\end{proof}
\begin{lem}\label{lem3}
Let $f$ be a transcendental meromorphic function and $M[f]$ be a differential monomial in $f$, then $$T\bigg(r,M[f]\bigg)=O(T(r,f))~~\text{and}~~S\bigg(r,M[f]\bigg)=S(r,f).$$
\end{lem}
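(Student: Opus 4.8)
The plan is to split the Nevanlinna characteristic as $T(r,M[f])=m(r,M[f])+N(r,M[f])$ and to estimate the proximity and counting parts separately, then to read off the two asserted conclusions. For the proximity part I would factor out the highest power of $f$: writing $\mu=q_{0}+q_{1}+\cdots+q_{k}$ for the degree, one has
$$M[f]=f^{\mu}\prod_{j=1}^{k}\left(\frac{f^{(j)}}{f}\right)^{q_{j}},$$
so that, by subadditivity of $m$ over products,
$$m(r,M[f])\le \mu\, m(r,f)+\sum_{j=1}^{k}q_{j}\, m\left(r,\frac{f^{(j)}}{f}\right)+O(1).$$
Nevanlinna's lemma on the logarithmic derivative makes each $m(r,f^{(j)}/f)$ equal to $S(r,f)$, whence $m(r,M[f])\le \mu\, m(r,f)+S(r,f)$.

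For the counting part I would argue pointwise at the poles of $f$. A pole of $f$ of multiplicity $p$ is a pole of $f^{(j)}$ of multiplicity $p+j$, hence a pole of $M[f]$ of multiplicity $\sum_{j=0}^{k}q_{j}(p+j)=\mu p+\mu_{*}$, where $\mu_{*}=q_{1}+2q_{2}+\cdots+kq_{k}$ is the weight. Summing over the poles gives $N(r,M[f])=\mu N(r,f)+\mu_{*}\,\ol{N}(r,f)\le(\mu+\mu_{*})T(r,f)$. Adding the two estimates yields $T(r,M[f])\le(2\mu+\mu_{*})T(r,f)+S(r,f)=O(T(r,f))$, which is the first assertion. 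This inequality already forces $o(T(r,M[f]))\subseteq o(T(r,f))$, i.e.\ one half of the identity $S(r,M[f])=S(r,f)$.

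The step I expect to be the main obstacle is the reverse comparison $T(r,f)=O(T(r,M[f]))$, which is what is really needed to make the error terms interchangeable; the difficulty is that one must recover $m(r,f)$ from $m(r,M[f])$, yet the logarithmic derivative lemma only controls $m(r,f^{(j)}/f)$ and not $m(r,f/f^{(j)})$. I would resolve this through the first fundamental theorem, exploiting that $M[f]$ carries the factor $f^{q_{0}}$ with $q_{0}\ge 1$: every zero of $f$ of multiplicity $m$ is a zero of $M[f]$ of multiplicity at least $q_{0}m$ (the other factors being holomorphic there), so $N(r,1/f)\le \tfrac{1}{q_{0}}T(r,M[f])+O(1)$, while the pole computation above gives $N(r,f)\le \tfrac{1}{\mu}T(r,M[f])$.

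Finally I would combine these with the factorization identity coming from $f^{\mu}=M[f]/H$, where $H=\prod_{j=1}^{k}(f^{(j)}/f)^{q_{j}}$ satisfies $m(r,H)=S(r,f)$. Taking proximities gives $\mu\, m(r,f)\le m(r,M[f])+N(r,H)+S(r,f)$, and since the poles of $H$ lie only over the zeros and poles of $f$ one has $N(r,H)\le \mu_{*}\big(N(r,f)+N(r,1/f)\big)$. Substituting the bounds for $N(r,f)$ and $N(r,1/f)$ obtained above, every term on the right is $O(T(r,M[f]))$, so $\mu\, m(r,f)=O(T(r,M[f]))$; together with $N(r,f)=O(T(r,M[f]))$ this yields $T(r,f)=O(T(r,M[f]))$. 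Consequently $T(r,M[f])\asymp T(r,f)$, and therefore $S(r,M[f])=S(r,f)$, completing the argument.
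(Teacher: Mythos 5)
Your proposal is correct, but a direct comparison with ``the paper's proof'' is not really possible: the paper offers no argument at all for this lemma, merely stating that the proof is similar to that of Lemma 2.4 of \cite{ly}. What you have written is, in substance, the standard self-contained argument that such a citation points to. Your forward half (splitting $T=m+N$, writing $M[f]=f^{\mu}\prod_{j}(f^{(j)}/f)^{q_{j}}$, invoking the logarithmic derivative lemma, and computing that a pole of $f$ of multiplicity $p$ gives a pole of $M[f]$ of multiplicity $\mu p+\mu_{*}$) is exactly the expected proof of $T(r,M[f])=O(T(r,f))$. More importantly, you correctly identified that the equality of error classes $S(r,M[f])=S(r,f)$ also requires the reverse comparison $T(r,f)=O(T(r,M[f]))$, and your mechanism for it — using the standing hypothesis $q_{0}\geq 1$ of the paper's Section 3 so that zeros of $f$ are zeros of $M[f]$ (giving $q_{0}N(r,1/f)\leq T(r,M[f])+O(1)$ via the first fundamental theorem) together with $\mu N(r,f)\leq N(r,M[f])$ — is sound. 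It is worth noting that this reverse comparison is essentially what the paper proves independently in Lemma \ref{lem2} (the non-constancy of $M[f]$), where the inequality $\mu T(r,f)\leq(2\mu_{*}+1)T(r,M[f])+S(r,f)$ is obtained from the identity $(1/f)^{\mu}=(f'/f)^{q_{1}}\cdots(f^{(k)}/f)^{q_{k}}/M[f]$, resting on the same two facts you use (zeros and poles of $f$ produce zeros and poles of $M[f]$); so your route and the paper's internal computations are close cousins. Two small points you should make explicit to be fully rigorous: first, your final estimate actually reads $\mu\,m(r,f)\leq O(T(r,M[f]))+S(r,f)$, and the $S(r,f)=o(T(r,f))$ term must be absorbed into the left-hand side (legitimate since $f$ is transcendental, hence $T(r,f)\to\infty$) before you may conclude $T(r,f)=O(T(r,M[f]))$; second, all of these comparisons hold only outside sets of finite linear measure, so the two inclusions between the error classes are obtained after taking the union of two such exceptional sets — routine, but that is precisely what the asserted identity $S(r,M[f])=S(r,f)$ means.
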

\begin{proof} The proof is similar to the proof of the Lemma 2.4 of (\cite{ly}).
\end{proof}
\begin{lem}\label{lem1.5}(\cite{f})
Let $f$ be a transcendental  meromorphic function defined in the complex plane $\mathbb{C}$. Then
$$\lim\limits_{r\to\infty}\frac{T(r,f)}{\log r}=\infty.$$
\end{lem}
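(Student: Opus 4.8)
The plan is to prove the statement via its sharp contrapositive: a meromorphic function whose characteristic grows no faster than $\log r$ must be rational, so that the transcendence of $f$ (i.e. $f$ is not rational) forces $T(r,f)/\log r$ to tend to infinity. The first point I would record is that, up to a bounded additive term, $T(r,f)$ agrees with the Ahlfors--Shimizu characteristic $T_{0}(r,f)=\int_{0}^{r}\frac{A(t,f)}{t}\,dt$, whose integrand $A(t,f)$ is non-negative and non-decreasing in $t$. Consequently $s\mapsto T_{0}(e^{s},f)$ is convex, so the quotient $T(r,f)/\log r$ has a genuine limit $L:=\lim_{r\to\infty}T(r,f)/\log r$ in $[0,\infty]$. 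It therefore suffices to exclude the case $L<\infty$; from now on I assume $T(r,f)\le C\log r$ for some constant $C$ and all large $r$, and I aim to deduce that $f$ is rational.

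First I would show that $f$ has only finitely many poles. By the first fundamental theorem $N(r,\infty;f)\le T(r,f)=O(\log r)$. If instead $f$ had infinitely many poles, then $n(t,\infty;f)\to\infty$, so for each $M$ there is $t_{M}$ with $n(t,\infty;f)\ge M$ for $t\ge t_{M}$; integrating gives $N(r,\infty;f)\ge \bigl(M-n(0,\infty;f)\bigr)\log(r/t_{M})$ for $r\ge t_{M}$, whence $\liminf_{r\to\infty}N(r,\infty;f)/\log r\ge M$. Since $M$ is arbitrary, this contradicts $N(r,\infty;f)=O(\log r)$. Hence the poles of $f$ are finite in number; let $P$ be the polynomial vanishing exactly at these poles to their respective orders, so that $g:=Pf$ is entire. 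As $P$ is a polynomial, $T(r,P)=O(\log r)$, and therefore $T(r,g)\le T(r,f)+T(r,P)+O(1)=O(\log r)$ as well.

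It remains to see that the entire function $g$ is a polynomial. Here I would invoke the standard comparison between the characteristic and the maximum modulus $M(r,g)=\max_{|z|=r}|g(z)|$, namely $\log^{+}M(r,g)\le\frac{R+r}{R-r}\,T(R,g)$ for $R>r$; taking $R=2r$ yields $\log M(r,g)\le 3\,T(2r,g)=O(\log r)$, so $M(r,g)\le A\,r^{C'}$ for suitable constants $A,C'$ and all large $r$. By the generalised Liouville theorem an entire function of polynomial growth is a polynomial, so $g$ is a polynomial and $f=g/P$ is rational. This contradicts the transcendence of $f$, so the case $L<\infty$ is impossible and $L=\infty$, which is exactly the assertion of the lemma.

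The step I expect to be delicate is the passage from mere finiteness of $L$ to a bound $T(r,f)=O(\log r)$ valid for \emph{all} large $r$: a bound holding only along a sparse sequence $r_{n}\to\infty$ would not by itself control $M(r,g)$ between the sampling radii. This is precisely what the convexity of $T_{0}(e^{s},f)$ in $s$ supplies, by guaranteeing that $T(r,f)/\log r$ has a true limit rather than merely a finite lower limit. A secondary point requiring care is the pole count: the crude estimate $n(r,\infty;f)=O(\log r)$ is too weak to force finitely many poles, and one must instead use the integrated lower bound for $N(r,\infty;f)$ displayed above.
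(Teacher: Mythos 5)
Your argument is correct, and there is nothing in the paper to compare it against in detail: the paper states this lemma with a citation to Yang--Yi's book \cite{f} and gives no proof, so your proposal supplies what the authors outsource. What you give is in fact the standard textbook argument that $T(r,f)=O(\log r)$ characterizes rational functions: bound $N(r,\infty;f)$ to force finitely many poles, clear them with a polynomial $P$, control $\log M(r,g)\le \frac{R+r}{R-r}T(R,g)$ with $R=2r$, and finish with the generalized Liouville theorem. Your most valuable observation is the one many write-ups gloss over: the lemma asserts a genuine limit, and finiteness of $\liminf T(r,f)/\log r$ alone does not immediately yield a bound $T(r,f)\le C\log r$ for all large $r$; your convexity argument via the Ahlfors--Shimizu characteristic $T_{0}$ closes exactly this gap. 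Alternatively, one could invoke Cartan's identity, $T(r,f)=\frac{1}{2\pi}\int_{0}^{2\pi}N(r,e^{i\theta};f)\,d\theta+\log^{+}|f(0)|$, which exhibits $T(r,f)$ itself as an increasing convex function of $\log r$ and avoids the passage through $T_{0}$ and the bounded-difference comparison (which, as a very minor point, needs the usual adjustment when $f(0)=\infty$). One further cosmetic slip: after your displayed lower bound for $N(r,\infty;f)$ the conclusion should read $\liminf_{r\to\infty}N(r,\infty;f)/\log r\ge M-n(0,\infty;f)$ rather than $\ge M$, but since $M$ is arbitrary the contradiction stands unchanged.
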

\begin{lem}\label{lem2} Let $M[f]$ be differential monomial generated by a transcendental meromorphic function $f$. Then $M[f]$ is not identically constant.
\end{lem}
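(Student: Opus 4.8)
The plan is to argue by contradiction: assume $M[f]$ is a constant $c$ and deduce that $f$ cannot be transcendental. First I would dispose of the degenerate possibility $c=0$. Since $f$ is transcendental we have $f\not\equiv 0$, and no derivative $f^{(j)}$ can vanish identically (otherwise $f$ would be a polynomial of degree less than $j$). Hence every factor of $M[f]=(f)^{q_{0}}(f')^{q_{1}}\cdots(f^{(k)})^{q_{k}}$ is $\not\equiv 0$, so $M[f]\not\equiv 0$ and therefore $c\neq 0$. In particular $M[f]$ would then have neither zeros nor poles, and this is the structural information I intend to exploit.

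Next I would run a local-order analysis to pin down the shape of $f$. At a pole $z_{0}$ of $f$ of order $p$, each factor $f^{(j)}$ has a pole of order $p+j$, so $M[f]$ has a pole of order $p\mu+\mu_{*}$ at $z_{0}$; since $\mu=q_{0}+\cdots+q_{k}\geq q_{0}\geq 1$ this exponent is strictly positive and the orders add without any cancellation, contradicting the absence of poles. Thus $f$ must be entire. Similarly, at a zero $z_{0}$ of $f$ of order $m$ every factor is holomorphic near $z_{0}$, while the factor $(f)^{q_{0}}$ alone contributes order $q_{0}m>0$; hence $M[f]$ would vanish at $z_{0}$, which is impossible. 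Therefore $f$ is forced to be a zero-free transcendental entire function (equivalently $f=e^{g}$ for some transcendental entire $g$).

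Finally I would invoke the logarithmic-derivative estimate. Writing $M[f]=f^{\mu}\prod_{j=0}^{k}\bigl(f^{(j)}/f\bigr)^{q_{j}}$ and using that $f$ is entire and zero-free, each quotient $f^{(j)}/f$ is entire, so $T\bigl(r,f^{(j)}/f\bigr)=m\bigl(r,f^{(j)}/f\bigr)=S(r,f)$ by the lemma on logarithmic derivatives. Hence $R:=\prod_{j}\bigl(f^{(j)}/f\bigr)^{q_{j}}$ satisfies $T(r,R)=S(r,f)$. The relation $f^{\mu}R=c$ then gives $T(r,f^{\mu})=T(r,R)+O(1)=S(r,f)$, i.e. $\mu\,T(r,f)=S(r,f)$. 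Since $\mu\geq 1$ this contradicts $T(r,f)\to\infty$, which is guaranteed by Lemma \ref{lem1.5}, completing the argument.

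The main obstacle I anticipate is the bookkeeping in the local-order step: one must verify carefully that at poles the contributions of all factors are genuine poles (so nothing cancels) and that at zeros no factor can contribute a pole, which is precisely what makes the reduction to the zero-free entire case clean. The secondary point requiring care is the passage $M[f]=f^{\mu}R$ together with the $S(r,f)$-bound on $R$; this estimate is only legitimate once $f$ has been shown to be zero-free and entire, so that $R$ is free of poles and the proximity bounds upgrade to full characteristic bounds.
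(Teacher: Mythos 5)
Your proof is correct, but it is organized differently from the paper's. The paper never assumes constancy until the last line: starting from the identity $\left(\frac{1}{f}\right)^{\mu}=\left(\frac{f'}{f}\right)^{q_1}\cdots\left(\frac{f^{(k)}}{f}\right)^{q_k}\frac{1}{M[f]}$, it applies the first fundamental theorem and the lemma on the logarithmic derivative, and then bounds the counting functions $N\left(r,\frac{f^{(i)}}{f}\right)$ by $\ol{N}(r,0;f)+\ol{N}(r,\infty;f)\leq N(r,0;M[f])+N(r,\infty;M[f])$, arriving at the unconditional quantitative inequality $\mu T(r,f)\leq(2\mu_{*}+1)T(r,M[f])+S(r,f)$; constancy of $M[f]$ then makes the right side $S(r,f)+O(1)$, contradicting Lemma \ref{lem1.5}. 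You instead use the constancy hypothesis at the outset to do a local-order analysis (orders add under products, so poles of $f$ would be poles of $M[f]$, and since $q_0\geq 1$ zeros of $f$ would be zeros of $M[f]$), forcing $f$ to be zero-free and entire; after that reduction the counting-function terms vanish and only the proximity estimate $m\left(r,\frac{f^{(j)}}{f}\right)=S(r,f)$ is needed. Both arguments run on the same engine — the factorization $M[f]=f^{\mu}\prod_j\left(\frac{f^{(j)}}{f}\right)^{q_j}$ plus the logarithmic-derivative lemma — so the difference is in what each buys: the paper's route produces a reusable growth inequality relating $T(r,f)$ and $T(r,M[f])$ for arbitrary transcendental meromorphic $f$ (no zero-free or entire reduction possible there, since constancy is not assumed), while your route trades that generality for cleaner bookkeeping, since once $f$ is zero-free entire no counting functions appear at all. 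One small point to keep explicit in your write-up: the zero analysis genuinely needs the standing assumption $q_0\geq 1$ (stated in the paper's Section 3), since for $q_0=0$ zeros of $f$ need not be zeros of $M[f]$; your appeal to the factor $(f)^{q_0}$ uses exactly this.
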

\begin{proof}Here $$\left(\frac{1}{f}\right)^{\mu}=\left(\frac{f'}{f}\right)^{q_1}\left(\frac{f''}{f}\right)^{q_2}\ldots\left(\frac{f^{(k)}}{f}\right)^{q_{k}}\frac{1}{M[f]}.$$
Thus by the first fundamental theorem and lemma of logarithmic derivative, we have
\begin{eqnarray}
\nonumber && \mu T(r,f)\\
\nonumber &\leq& \sum\limits_{i=1}^{k}q_{i}N\left(r,\left(\frac{f^{(i)}}{f}\right)\right)+T(r,M[f])+S(r,f)\\
\nonumber  &\leq& \sum\limits_{i=1}^{k}iq_{i}\left\{\ol{N}(r,0;f)+\ol{N}(r,\infty;f)\right\}+T(r,M[f])+S(r,f)\\
\nonumber  &\leq& \sum\limits_{i=1}^{k}iq_{i}\left\{N(r,0;M[f])+N(r,\infty;M[f])\right\}+T(r,M[f])+S(r,f)\\
\label{eqc1}  &\leq& (2\mu_{\ast}+1)T(r,M[f])+S(r,f),
\end{eqnarray}
Since $f$ is a transcendental meromorphic function, so by Lemma \ref{lem1.5} and inequlity (\ref{eqc1}),  $M[f]$ must be not identically constant.
\end{proof}
\begin{lem}
Let $f$ be a transcendental meromorphic function and $M[f]$ be a differential monomial as given by $M[f] = (f)^{q_0}(f')^{q_1} \cdots (f^{(k)})^{q_k},$ where $q_0( \geq 2), q_1, q_2, \cdots ,q_k~(\geq 1),(k \geq 1)$ are non negative integers. Let $g(z):=M[f]-1$, and $\displaystyle{h(z):=\frac{M'[f]}{f^{q_0 -1}}}$.\par
Next, we define the following function:
\begin{eqnarray}\label{eq0.1}
 F(z)= 2~\bigg( \frac{g'(z)}{g(z)}\bigg)^{2} + 3~\bigg( \frac{g'(z)}{g(z)}\bigg)^{'}-2~\bigg( \frac{h'(z)}{h(z)}\bigg)^{'} \\
\nonumber + \bigg( \frac{h'(z)}{h(z)}\bigg)^{2}-4\bigg( \frac{g'(z)h'(z)}{g(z)h(z)}\bigg),
\end{eqnarray}
Then $F \not\equiv 0.$
\end{lem}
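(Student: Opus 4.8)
The plan is to argue by contradiction: assume $F\equiv 0$ and show that $F$ is nevertheless forced to carry genuine poles, so that no such identity can hold. The starting point is the structural relation that falls straight out of the definitions: since $g'=M'[f]$ and $h=M'[f]/f^{q_0-1}$, we have
\[
g'=f^{\,q_0-1}\,h .
\]
This lets me read off the local behaviour of $h$ from that of $g$ without ever expanding the monomial. I will also use the elementary but crucial remark that a zero of $g$ (that is, a $1$-point of $M[f]$) can never coincide with a zero or a pole of $f$: at a zero of $f$ the factor $f^{q_0}$ (with $q_0\ge 2$) forces $M[f]=0$, while at a pole of $f$ one has $M[f]=\infty$, so in both cases $M[f]\ne1$. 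Hence every zero of $g$ lies in the region where $f$ is finite and non-zero.

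The heart of the proof is a one-point computation. Let $z_0$ be a zero of $g$ of multiplicity $m\ge 1$. Then $g'/g$ has a simple pole at $z_0$ with residue $m$. Because $f^{q_0-1}$ is holomorphic and non-vanishing at $z_0$, the relation $g'=f^{q_0-1}h$ shows that $h$ vanishes there to order exactly $m-1$, so $h'/h$ has residue $m-1$ (and is regular when $m=1$). Substituting these principal parts into
\[
F=2\Big(\tfrac{g'}{g}\Big)^2+3\Big(\tfrac{g'}{g}\Big)'-2\Big(\tfrac{h'}{h}\Big)'+\Big(\tfrac{h'}{h}\Big)^2-4\,\tfrac{g'h'}{gh},
\]
and collecting the coefficient of $(z-z_0)^{-2}$ gives
\[
2m^{2}-3m+(m-1)^{2}+2(m-1)-4m(m-1)=-(m^{2}-m+1).
\]
Since $m^{2}-m+1>0$ for every integer $m\ge1$, this coefficient never vanishes, so $F$ has a pole of order exactly two at $z_0$. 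Consequently, the moment $M[f]$ assumes the value $1$ at least once, the assumption $F\equiv 0$ is already contradicted.

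It therefore remains only to dispose of the degenerate situation in which $g=M[f]-1$ has no zeros at all, i.e.\ $M[f]$ omits the value $1$. I would handle this by the same local philosophy at the other candidate poles of $F$. At a pole of $f$ of order $p$ the pole orders of $g$ and of $h$ are determined by $p$ and the $q_i$ (again through $g'=f^{q_0-1}h$), and the resulting coefficient of $(z-z_0)^{-2}$ in $F$ is a fixed quadratic expression in these two orders; verifying that it cannot vanish for the admissible values forces $f$ to be entire. A parallel bookkeeping at the zeros of $f$ then constrains the local structure so severely that one is left with an entire $f$ for which $M[f]-1$ is a zero-free entire function $e^{\gamma}$, a possibility incompatible with $f$ being transcendental and with $M[f]$ being non-constant (Lemma \ref{lem2}). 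I expect this degenerate case --- checking that the quadratic principal-part coefficients at the poles and zeros of $f$ cannot conspire to vanish, and then excluding the zero-free alternative --- to be the main obstacle, whereas the decisive value-distribution input is the clean identity $-(m^{2}-m+1)\neq0$ obtained at the $1$-points.
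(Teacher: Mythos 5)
Your local computation at the $1$-points of $M[f]$ is correct, and it coincides exactly with the paper's own Claim 1: at a zero of $g$ of multiplicity $m$ (necessarily not a zero or pole of $f$), the coefficient of $(z-z_0)^{-2}$ in $F$ is $-(m^{2}-m+1)\neq 0$, so $F\equiv 0$ forces $g$ to be zero-free. The problem is everything after that. The case you call ``degenerate'' --- $g$ zero-free --- is where the entire difficulty of the lemma lives, and your plan for it fails at its first step: the claim that the coefficient of $(z-z_0)^{-2}$ in $F$ at a pole of $f$ ``cannot vanish for the admissible values'' is false. Take $M[f]=f^{8}(f')^{4}$ (so $q_0=8\geq 2$, $q_1=4\geq 1$, $k=1$) and let $z_0$ be a simple pole of $f$. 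Then $g$ has a pole of order $P=8+4\cdot 2=16$, $g'$ one of order $17$, and $h=g'/f^{7}$ one of order $Q=17-7=10$; since $g'/g\sim -P/(z-z_0)$ and $h'/h\sim -Q/(z-z_0)$, the coefficient in question is $2P^{2}+3P-2Q+Q^{2}-4PQ=512+48-20+100-640=0$. So your local test is silent at poles of $f$, and ``$f$ must be entire'' does not follow. This is not an isolated accident: for simple poles the vanishing condition reads $2(q_0-1)^{2}-(q_0-1)=u^{2}+u+1$ with $u=\mu-q_0+1+\mu_{*}$, a Pell-type equation with infinitely many admissible solutions, e.g.\ $(q_0,u)=(8,9),(20,26),\dots$

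Second, even if poles of $f$ could be excluded, your endgame is an assertion rather than a proof. You claim that an entire transcendental $f$ with $M[f]-1=e^{\gamma}$ zero-free is ``incompatible'' with Lemma \ref{lem2}; it is not --- that lemma only says $M[f]$ is non-constant, and $1+e^{\gamma}$ is non-constant whenever $\gamma$ is. Ruling out this configuration is a global growth problem (essentially the qualitative Hayman-type statement that the whole paper is quantifying), and no amount of Laurent-coefficient bookkeeping at individual points can settle it. This is precisely where the paper's proof does its real work: it first shows (Claims 2--3) that zeros of $h$ would also be double poles of $F$, forcing all zeros of $f$ to be simple; it then introduces $\phi=h/g$, proves $\phi$ is entire with $T(r,\phi)=S(r,f)$ and $m(r,f)=m(r,1/f)=S(r,f)$, which yields the contradiction $T(r,f)=S(r,f)$ when $q_0>2$; and the remaining case $q_0=2$ requires a further differential-equation analysis at the zeros of $f$ (the monomials $l_{i1}$ in $\phi'/\phi$ and the auxiliary function $G$), following Lemma 3 of Huang and Gu \cite{hg}. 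Your proposal contains no substitute for any of this machinery, so the essential part of the lemma remains unproved.
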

\begin{proof}
On contrary, let us assume that $F \equiv 0.$ Now,
\begin{equation}\label{eq1}
M'[f]=g'=f^{q_0-1}h.
\end{equation}
Thus
\begin{eqnarray}\label{eq2}
N(r,0;f) \leq N(r,0;g').
\end{eqnarray}
\\
\textbf{Claim 1:} First we claim that $g(z) \neq 0.$\\
\textbf{Proof of Claim 1:} \\
If $z_1$ is a zero of $g$ of multiplicity $l~(l \geq 1)$, then $g(z_1)=M[f](z_1) -1 =0$. Thus $f(z_1) \neq 0,\infty$. Now, we consider two cases :\\
\textbf{Case -1.1} $l \geq 2$.
\par
In this case, $z_1$ is a zero of $h$ of order $l-1$. Using Laurent Series expansion of $F$ about $z_1$, one can see that $z_1$ is a pole of $F$ of order $2$ if the coefficient of $(z-z_{1})^{-2}$ in $F$ is non zero, i.e., if $(2l^{2} -3l+(l-1)^{2} +2(l-1)-4l(l-1)) \neq 0$ for all $l$, i.e., the polynomial $-l^{2} +l-1$ has no real solution, which is true by the given condition. Thus $z_1$ is a pole of $F$, which contradicts the fact that $F \equiv 0$. Thus on our assumption $F \equiv 0$, $g(z) \neq 0$.\\
\textbf{Case -1.2} $l=1$.
\par
The equation (\ref{eq1}) yields that  $h(z_1) \neq 0$. In this case, the coefficient of $(z-z_1)^{-2}$ in $F$ is $(-1)$. Thus $z=z_1$ is a pole of $F$ of order $2$, which contradicts the fact that $F \equiv 0$.\\
Hence the claim is true, i.e., $g$ has no zero.\\
\\
\textbf{Claim 2:} Next we claim that $h(z) \neq 0$.\par
\textbf{Proof of Claim 2:}\\
Let $z_2$ be a zero of $h$ of order $m$. Thus from equation (\ref{eq1}), $M'[f](z_2) =0$, i.e., $g'(z_2) =0$. Hence $g(z_2) \neq \infty$. Also, by \textbf{Claim 1}, $g(z_2) \neq 0$. Now, we consider two cases :\\
\textbf{Case -2.1} $m \geq 2$.\par
If $m\geq 2$, then $z_2$ is the zero of $h'(z)$ of order $(m-1)$. So by Laurentz series expansion, one can see that the coefficient of $(z-z_2)^{-2}$ in $F$ is $(m^{2}+2m)$, which is non zero. Thus $z=z_2$ is a pole of $F$ of order 2, which contradicts the fact that $F \equiv 0$.\\
\textbf{Case -2.1} $m=1$.\par
If $m=1$, then the coefficient of $(z-z_2)^{-2}$ in $F$ is $3$, which again  contradicts the fact that $F \equiv 0$.\\
Hence Claim 2 is true.\\
\\
\textbf{Claim 3:} All zeros of $f(z)$ are simple.\\
\textbf{Proof of Claim 3:}\\
If $z_3$ be a zero of $f$ of order $\geq 2$, then by definition of $h$, $h(z_3) =0$, which contradicts the \textbf{Claim 2}. Thus the Claim 3 is true.\\
\\
Now, we define another function as $\phi(z) =\displaystyle{\frac{h(z)}{g(z)}}$. Then
\begin{eqnarray}\label{joy1}
\frac{g'}{g} &=& \phi f^{q_0-1}.
\end{eqnarray}
\begin{eqnarray}\label{joy2}
\frac{h'}{h}&=& \phi f^{q_0-1} + \frac{\phi '}{\phi}.
\end{eqnarray}
Clearly $\phi \not\equiv 0$, otherwise $\displaystyle{\frac{g'}{g} \equiv 0}$, which contradicts Lemma \ref{lem2}.
\\
\\
\textbf{Claim 4:} $\phi(z)$ is an entire function.\\
\textbf{Proof of Claim 4:}\\
As $g$ and $h$ has no zero, so poles of $\phi$ comes from the poles of $h$. Thus poles of $\phi$ comes from the poles of $f$.  Again, zeros of $\phi$ comes from the poles of $g$, i.e., from poles of $f$.
\par
Let $z_4$ be a pole of $f$ of order $t$. Then $z_4$ is a pole of $g$ of order $t\mu + \mu_*$ and pole of $h$ of order $t\mu + \mu_* +1 -t(q_0 -1)$. Thus $z_4$ is a pole of $\phi$ of order $1-t(q_0 -1)$ if $1-t(q_0 -1) > 0$ and $z_4$ is a zero of $\phi$ of order $t(q_0 -1)-1$ if $t(q_0 -1)-1 >0$.
\par
As $q_0 \geq 2$, so $\phi$ is an \textbf{entire function}.  Also, if $q_0 =2$, then zeros of $\phi$ occur only at multiple poles of $f$ and if $q_0 >2$, then zeros of $\phi$ occur at only poles of $f$.
\par
Next, in view of Lemma \ref{lem2}, we can write
\begin{equation}\label{gul2}
\frac{1}{f^{\mu}}=\frac{g+1}{f^{\mu}} - \frac{g'}{f^{\mu}}\frac{g}{g'}.
\end{equation}
and
\begin{eqnarray}\label{gul3}
 \frac{\mu}{q_0 -1}T(r,\phi)= \frac{\mu}{q_0 -1}m(r,\phi)&=& \frac{\mu}{q_0 -1}m\bigg(r,\frac{g'}{g}\frac{1}{f^{q_0-1}}\bigg)\\
\nonumber &\leq & m\bigg(r,\frac{1}{f^{\mu}}\bigg)+S(r,f).
\end{eqnarray}
Thus using Lemma \ref{lem1}, equation (\ref{gul2}) and inequality (\ref{gul3}), (\ref{eq2}), we have
\begin{eqnarray}
\nonumber \frac{\mu}{q_0 -1}T(r,\phi)&=& \frac{\mu}{q_0 -1}m(r,\phi)\\
\nonumber & \leq & m\bigg(r,\frac{g}{g'}\bigg)+S(r,f)\\
\nonumber & \leq & N\bigg(r,\frac{g'}{g}\bigg) -N\bigg(r,\frac{g}{g'}\bigg) +S(r,f) \\
\nonumber & \leq &\overline{N}(r,g) +N\bigg(r,\frac{1}{g}\bigg) - N\bigg(r,\frac{1}{g'}\bigg)+S(r,f) \\
\nonumber & \leq & \overline{N}(r,f) -N\bigg(r,\frac{1}{f}\bigg)+S(r,f)
\end{eqnarray}
Again, using (\ref{gul2}), we have
\begin{eqnarray}
\nonumber && \mu\cdot m\bigg(r,\frac{1}{f}\bigg)  \leq  \overline{N}(r,f)-N\bigg(r,\frac{1}{f}\bigg)+S(r,f)\\
\nonumber \text{i.e.,} &&\mu T(r,f) \leq  N(r,f) +(\mu -1)N\bigg(r,\frac{1}{f}\bigg)+S(r,f)\\
\nonumber \text{i.e.,} && m(r,f) +(\mu -1)m\bigg(r,\frac{1}{f}\bigg) \leq S(r,f).
\end{eqnarray}
Hence,
\begin{eqnarray}\label{eq5}
m(r,f) &=& m\bigg(r,\frac{1}{f}\bigg) = S(r,f), \\
\label{eq6}
T(r,\phi) &=& S(r,f).
\end{eqnarray}
Next, we consider two cases :\\
\textbf{Case-1} Assume $q_0 > 2$.\\
If $z_4$ is  a pole of $f$ of order $t$, then $z_4$ is a zero of $\phi$ of order $t(q_0 -1) - 1$. As $t(q_0 -1) -1 \geq 2t -1 \geq t$, so
\begin{equation}\label{eq7}
N(r,f) \leq N(r,0,\phi)
\end{equation}
Combining (\ref{eq5}),(\ref{eq6}) and (\ref{eq7}), we get
\begin{equation}
T(r,f)=S(r,f),
\end{equation}
which is absurd as $f$ is a non constant transcendental meromorphic function. Thus our assumption is wrong. Hence $F \not\equiv 0$.\\
\textbf{Case-2} Next, we assume that $q_0 =2$.\\
Substituting (\ref{joy1}) and (\ref{joy2}) in (\ref{eq0.1}) and using the fact that $F \equiv 0$, we obtain
\begin{eqnarray}
f^{2} \phi ^{2} +\bigg[2  \bigg(\frac{\phi '}{\phi}\bigg)' - \bigg(\frac{\phi '}{\phi}\bigg)^{2}\bigg]+f\phi' - f' \phi \equiv 0.
\end{eqnarray}
From Lemma (\ref{lem2}), it is clear that $\phi\not\equiv0$. If $z_5$ is the zero of $f$, then $\phi (z_5) \neq 0,\infty$.  Thus proceeding similarly as in Lemma 3 of (\cite{hg}), we can write
\begin{equation}\label{su1}
f^{(i)}(z_5) = \frac{l_{i1}(z_5)}{\phi (z_5)},
\end{equation}
where $l_{i1}(z)$ are the differential monomials in $\displaystyle{\frac{\phi '}{\phi}}$ for $ i=1,2, \cdots, k$.
\par
Since $g(z_5)=-1$ and $h(z_5)=q_{0}\bigg((f')^{q_1 +1}(f'')^{q_2} \cdots (f^{(k)})^{q_k}\bigg)(z_5)$, so,
\begin{equation}\label{eq9}
\phi (z_5) = -q_0\bigg((f')^{q_1 +1}(f'')^{q_2} \cdots (f^{(k)})^{q_k}\bigg)(z_5).
\end{equation}
Thus using (\ref{eq9}) and (\ref{su1}), we have
\begin{equation}
\phi (z_5) = -q_0\bigg(\frac{(l_{11})^{q_1 +1}(l_{21})^{q_2} \cdots (l_{k1})^{q_k}}{(\phi)^{q_1 +q_2+ \cdots +q_k +1}}\bigg)(z_5).
\end{equation}
Next we define $$\displaystyle{G :=\phi^{q_1 +q_2+ \cdots +q_k +2}  + q_0~ (l_{11})^{q_1 +1}(l_{21})^{q_2} \cdots (l_{k1})^{q_k}}.$$
If $G \not\equiv 0$, then $$N(r,0;f)=\overline{N}(r,0;f)\leq N(r,0;G) \leq O(T(r,\phi)) +O(1) =S(r,f).$$
Thus $T(r,f) = T(r,\frac{1}{f})+O(1)=S(r,f)$, a contradiction as $f$ is non constant transcendental meromorphic function.\\
If $G \equiv 0$, then $$\displaystyle{\phi^{q_1 +q_2+ \cdots +q_k +2}  = - q_0~ (l_{11})^{q_1 +1}(l_{21})^{q_2} \cdots (l_{k1})^{q_k}}.$$
Thus by Lemma of Logarithmic derivative, $T(r,\phi) = m(r,\phi) =S(r,\phi)$, i.e., $\phi$ is a polynomial or a constant (as $\phi$ is an entire function). \par
Now, proceeding similarly as in Lemma 3 of (\cite{hg}), one can show that $f$ is rational, which is impossible. Hence the proof.
\end{proof}
\begin{lem}(\cite{ld})\label{lda}
  Let $f$ be a transcendental meromorphic function and $\alpha=\alpha(z)(\not\equiv 0,\infty)$ be a small function of $f$. If  $\psi=\alpha(f)^{n}(f^{(k)})^{p}$, where $n(\geq 0)$, $p(\geq 1)$, $k(\geq 1)$ are integers, then for any small function $a=a(z)(\not\equiv 0,\infty)$ of $\psi$,
$$(p + n)T(r, f)\leq \overline{N}(r,\infty; f) + \overline{N}(r, 0; f) + pN_{k}(r, 0; f) + \overline{N}(r, a; \psi) + S(r, f).$$
\end{lem}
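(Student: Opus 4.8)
The plan is to derive the inequality by combining the lemma of logarithmic derivative with the second main theorem applied to $\psi$, and then by a careful comparison of multiplicities at the zeros of $f$. First I would rewrite the defining relation $\psi=\alpha f^{n}(f^{(k)})^{p}$ as
$$\frac{1}{f^{n+p}}=\frac{\alpha}{\psi}\left(\frac{f^{(k)}}{f}\right)^{p}.$$
Taking proximity functions and using that $m(r,\alpha)=S(r,f)$ (since $\alpha$ is a small function) together with the lemma of logarithmic derivative, which gives $m\!\left(r,f^{(k)}/f\right)=S(r,f)$, I obtain $(n+p)\,m\!\left(r,\tfrac{1}{f}\right)\leq m\!\left(r,\tfrac{1}{\psi}\right)+S(r,f)$. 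By the first fundamental theorem, $m(r,1/f)=T(r,f)-N(r,0;f)+O(1)$ and $m(r,1/\psi)=T(r,\psi)-N(r,0;\psi)+O(1)$, so this becomes
$$(n+p)\,T(r,f)\leq (n+p)N(r,0;f)+T(r,\psi)-N(r,0;\psi)+S(r,f).$$

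Next I would bound $T(r,\psi)$ by the second main theorem applied to $\psi$ with the three targets $0$, $\infty$ and the small function $a$, which are pairwise distinct since $a\not\equiv 0,\infty$. This yields $T(r,\psi)\leq \overline{N}(r,0;\psi)+\overline{N}(r,\infty;\psi)+\overline{N}(r,a;\psi)+S(r,\psi)$, and by Lemma \ref{lem3} one has $S(r,\psi)=S(r,f)$. Substituting gives
$$(n+p)T(r,f)\leq (n+p)N(r,0;f)+\bigl[\overline{N}(r,0;\psi)-N(r,0;\psi)\bigr]+\overline{N}(r,\infty;\psi)+\overline{N}(r,a;\psi)+S(r,f).$$
Since the poles of $\psi$ occur only at poles of $f$ and at the negligibly many poles of $\alpha$, one immediately has $\overline{N}(r,\infty;\psi)\leq \overline{N}(r,\infty;f)+S(r,f)$, producing the desired $\overline{N}(r,\infty;f)$ term.

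The heart of the argument, and the step I expect to be the main obstacle, is the local multiplicity estimate
$$(n+p)N(r,0;f)+\overline{N}(r,0;\psi)-N(r,0;\psi)\leq \overline{N}(r,0;f)+pN_{k}(r,0;f)+S(r,f),$$
which I would verify pointwise. At a zero of $f$ of order $m$ (away from the zeros and poles of $\alpha$), the order of $\psi$ is $nm+p\max(m-k,0)$ while $\overline{N}(r,0;\psi)$ contributes $1$; hence for $n\geq 1$ the left side contributes $pm-p\max(m-k,0)+1=p\min(m,k)+1$, exactly matching the contribution $1+p\min(m,k)$ on the right. The cases $n=0$ and $m\leq k$ only improve the inequality, and a zero of $f^{(k)}$ that is not a zero of $f$, of order $s$, contributes $1-ps\leq 0$ on the left and nothing on the right. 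Summing these contributions and absorbing the $\alpha$-terms into $S(r,f)$ yields the estimate, which together with the previous displays completes the proof. The delicate point is to keep $\overline{N}(r,0;\psi)$ and $-N(r,0;\psi)$ together: separating them would introduce an uncontrolled $\overline{N}(r,0;f^{(k)})$ term, and it is precisely the cancellation at the extra zeros of $f^{(k)}$ that makes the sharp coefficient $pN_{k}(r,0;f)$ appear.
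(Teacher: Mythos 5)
This lemma is not proved in the paper at all---it is imported verbatim from Lahiri and Dewan \cite{ld}---so there is no internal argument to compare against; your proof is, in substance, the standard one from that source: the logarithmic-derivative reduction $(n+p)\,m(r,1/f)\le m(r,1/\psi)+S(r,f)$, the first fundamental theorem, the second main theorem for the three targets $0$, $\infty$, $a$ (needed in its small-functions form, since $a$ is only small), and the pointwise multiplicity comparison that produces $\overline{N}(r,0;f)+pN_{k}(r,0;f)$. The argument is correct; the one imprecision is your claim that at a zero of $f$ of order $m$ the zero of $\psi$ has order exactly $nm+p\max(m-k,0)$---when $m\le k$ the derivative $f^{(k)}$ may still vanish there, so this is only a lower bound---but since that order enters your estimate with a negative sign while the reduced counting contribution is capped at $1$, the underestimate can only enlarge the left-hand side, so the case you verify is the worst one and the inequality stands.
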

\section {Proof of the Theorem}
\begin{proof} [\textbf{Proof of Theorem \ref{th1.1}}] We define
$$g(z):= (f)^{q_0}(f^{(k)})^{q_k}-1,$$ where $q_0( \geq 2), q_k~(\geq 1) ~~k (\geq 1)$ are non negative integers, and $\displaystyle{h(z):=\frac{g'}{f^{q_0 -1}}}$.
Also,
\begin{eqnarray}\label{cor1}
\nonumber F(z)= 2~\bigg( \frac{g'(z)}{g(z)}\bigg)^{2} + 3~\bigg( \frac{g'(z)}{g(z)}\bigg)^{'}-2~\bigg( \frac{h'(z)}{h(z)}\bigg)^{'} \\
\label{eq0}+ \bigg( \frac{h'(z)}{h(z)}\bigg)^{2}-4\bigg( \frac{g'(z)h'(z)}{g(z)h(z)}\bigg),
\end{eqnarray}
Clearly $F(z)\not\equiv 0,$ and $f$ has no simple poles. Next we define another function as
\begin{eqnarray*}
\beta&:=&q_{0}(f')(f^{(k)})^{q_k}+q_{k}f(f^{(k)})^{q_{k}-1}f^{(k+1)}-f(f^{(k)})^{q_{k}}\frac{g'}{g}.
\end{eqnarray*}
Then \begin{equation}\label{fr1}
       f^{q_{0}-1}\beta=-\frac{g'}{g},
     \end{equation}
and
\begin{equation}\label{fr2}
  h=-\beta g,
\end{equation}
and
\begin{equation}\label{fr3}
  \beta^{2}F=\beta^{2}\{(\frac{g'}{g})'-(\frac{g'}{g})^{2}\}-2\beta\beta'(\frac{g'}{g})+(\beta')^{2}-2(\beta\beta'-(\beta')^{2})
\end{equation}
We note that
\begin{enumerate}
  \item [i)] Equation (\ref{fr2}) gives that the zeros of $h$ come from the zeros of $\beta$ or, the zeros of $g$.
\item [ii)] Equation (\ref{fr1}) gives that the multiple poles of $f$ with multiplicity $p(\geq 2)$ are the zeros of $\beta$ with multiplicity $(q_{0}-1)p-1$.
\item [iii)] If $z_{0}$ is a zero of $g$, then it can not be a pole of $f$. Thus from equation (\ref{fr1}), it is clear that $z_{0}$ is a simple pole of $\beta$.
\item [iv)] From (iii) and equation (\ref{fr3}) gives that the poles of  $ \beta^{2}F$ only come from the zeros of $g$. Moreover, poles of  $ \beta^{2}F$ have multiplicity atmost $4$. Thus
\begin{equation}\label{gul30}
  N(r,\beta^{2}F)\leq 4\ol{N}(r,\frac{1}{g}).
\end{equation}
Since $m(r,F)=S(r,f)$ and $m(r,\beta)=S(r,f)$, therefore $m(r,\beta^{2}F)=S(r,f)$. Thus
\begin{equation}\label{gul31}
 T(r,\beta^{2}F)\leq 4\ol{N}(r,\frac{1}{g})+S(r,f).
\end{equation}
\end{enumerate}
Let $z_{0}$ be a zero of $f$ of multiplicity $q(\geq k+1)$. Then equation (\ref{fr1}) gives that it zero of $\beta$ of order atleast $q_{k}(q-k)+(q-1)$. Therefore it is a zero of $\beta^{2}F$ of order atleast $2(q_{k}(q-k)+(q-1))-2=(2q-2)+2q_{k}(q-k)-2\geq (2q-2)$. Thus
\begin{equation}\label{block1}
2N(r,\frac{1}{f})-2\ol{N}(r,\frac{1}{f})\leq N(r,\frac{1}{\beta^{2}F})\leq T(r,\beta^{2}F)+O(1)\leq 4\ol{N}(r,\frac{1}{g})+S(r,f).
\end{equation}
Again, from Lemma \ref{lda}, we have
\begin{equation}\label{block2}
(q_{0} +q_{k})T(r, f)\leq \overline{N}(r,\infty; f) + \overline{N}(r, 0; f) + q_{k}N_{k}(r, 0; f) + \overline{N}(r,\frac{1}{g}) + S(r, f).
\end{equation}
Combinig twice of (\ref{block2}) and (\ref{block1}), we obtain
\begin{eqnarray*}
2(q_{0} +q_{k})T(r, f)+2N(r,\frac{1}{f})-2\ol{N}(r,\frac{1}{f}) &\leq& 2\overline{N}(r,\infty; f) + 2\overline{N}(r, 0; f) + 2q_{k}N_{k}(r, 0; f)\\
&& + 6\overline{N}(r,\frac{1}{g}) + S(r, f).
\end{eqnarray*}
Since $f$ has no simple pole, so we have
\begin{eqnarray}
2(q_{0} +q_{k})T(r, f) &\leq& 2\overline{N}_{(2}(r,\infty; f) + 2\overline{N}(r, 0; f) + 2q_{k}N_{k}(r, 0; f)\\
\nonumber && + 6\overline{N}(r,\frac{1}{g}) + S(r, f).
\end{eqnarray}
i.e.,
\begin{eqnarray}
&&(2q_{0}-3)T(r, f)+m(r,f)+N(r,f)+(2+2q_{k})m(r,\frac{1}{f})+(2+2q_{k})N(r,\frac{1}{f})\\
\nonumber &\leq& 2\overline{N}_{(2}(r,\infty; f) + 2\overline{N}(r, 0; f) + 2q_{k}N_{k}(r, 0; f)+ 6\overline{N}(r,\frac{1}{g}) + S(r, f).
\end{eqnarray}
i.e.,
\begin{eqnarray}
&&(2q_{0}-3)T(r, f)+N(r,f)+(2+2q_{k})N(r,\frac{1}{f})\\
\nonumber &\leq& 2\overline{N}_{(2}(r,\infty; f) + 2\overline{N}(r, 0; f) + 2q_{k}N_{k}(r, 0; f)+ 6\overline{N}(r,\frac{1}{g}) + S(r, f).
\end{eqnarray}
i.e.,
\begin{eqnarray}
&&(2q_{0}-3)T(r, f)+N(r,f)\\
\nonumber &\leq& 2\overline{N}_{(2}(r,\infty; f) + 6\overline{N}(r,\frac{1}{g}) + S(r, f).
\end{eqnarray}
Thus
\begin{eqnarray}
&&(2q_{0}-3)T(r, f)\\
\nonumber &\leq&  6\overline{N}(r,\frac{1}{g}) + S(r, f).
\end{eqnarray}
This completes the proof.
\end{proof}
\begin{center} {\bf Acknowledgement} \end{center}
The author is grateful to the anonymous referee for his/her valuable suggestions which considerably improved the presentation of the paper.\par
The research work of the first and the fourth authors are  supported by the Department of Higher Education, Science and Technology \text{\&} Biotechnology, Govt.of West Bengal under the sanction order no. 216(sanc) /ST/P/S\text{\&}T/16G-14/2018 dated 19/02/2019.\par
The second and the third authors are thankful to the Council of Scientific and Industrial Research, HRDG, India for granting Junior Research
Fellowship during the tenure of which this work was done. 

\end{document}